\newtheorem{theorem}{Theorem}[section]
\newtheorem{definition}[theorem]{Definition}
\newtheorem{proposition}[theorem]{Proposition}
\newtheorem{lemma}[theorem]{Lemma}
\newcommand\incircbin
\newcommand\@incircbin[2]
\newcommand{\ostar }{\incircbin{\star}}
\begin{document}

\author{J.P. McCarthy}
\address{Department of Mathematics, Cork Institute of Technology, Cork, Ireland.\newline \qquad\emph{E-mail address:} {\tt jeremiah.mccarthy@cit.ie}}

\title[Diaconis--Shahshahani Upper Bound Lemma for Finite Quantum Groups]
{Diaconis--Shahshahani Upper Bound Lemma for Finite Quantum Groups}

\subjclass[2000]{46L53 (60J05, 20G42)}
\keywords{random walks, finite quantum groups, representation theory}

\begin{abstract}
A central tool in the study of ergodic random walks on finite groups is the Upper Bound Lemma of Diaconis and Shahshahani. The Upper Bound Lemma uses the representation theory of the group to generate upper bounds for the distance to random and thus can be used to determine convergence rates for ergodic walks. The representation theory of quantum groups is remarkably similar to the representation theory of classical groups. This allows for a generalisation of the Upper Bound Lemma to an Upper Bound Lemma for finite quantum groups.  The Upper Bound Lemma is used to study the convergence of ergodic random walks on the dual group $\widehat{S_n}$ as well as on the truly quantum groups of Sekine.
\end{abstract}

\maketitle

\section*{Introduction}
Let $\sigma_1,\sigma_2,\dots,\sigma_k$ be a sequence of shuffles of a deck of cards. If the deck starts in some known order, the order of the deck after these $k$ shuffles is given by
$$\Sigma_k=\sigma_k\cdots\sigma_2\cdot \sigma_1.$$
Suppose the shuffles are random variables independently and identically distributed as $\sigma_i\sim\nu\in M_p(S_{52})$, the set of probability distributions on $S_{52}$, then $\Sigma_k\sim \nu^{\star k}$ where $\nu^{\star k}$ is defined inductively as
$$\nu^{\star (k+1)}(\{s\})=\sum_{t\in G}\nu(\{st^{-1}\})\nu^{\star k}(\{t\}).$$

 If $\nu$ is not concentrated on a subgroup, nor a coset of a normal subgroup of $S_{52}$, then as $k\rightarrow \infty$, $\nu^{\star k}\rightarrow \pi_{S_{52}}$, the \emph{uniform} or \emph{random} distribution on $S_{52}$.

\bigskip

This generalises to arbitrary finite groups. Given given independent and identically distributed $s_i\sim \nu\in M_p(G)$, not concentrated on a subgroup, nor a coset of a normal subgroup, as $k\rightarrow \infty$:
$$\xi_k=s_k\cdots s_2\cdot s_1\,\sim \,\nu^{\star k}\rightarrow \pi,$$
the random distribution on $G$. In this case the random walk is said to be \emph{ergodic}.

\newpage
In the previous statement, convergence is with respect to the \emph{total variation distance}:

$$\|\nu^{\star k}-\pi\|:=\sup_{S\subset G}|\nu^{\star k}(S)-\pi(S)|.$$

\bigskip

The inspiration for this work, which is the subject of the author's PhD thesis \cite{PhD}, is the following result of Diaconis and Shahshahani \cite{DS}:

\medskip

\noindent {\bf Theorem.}  (Upper Bound Lemma).
\emph{Let $\nu$ be a probability on a finite group $G$ and $k\in \mathbb{N}$. Then}
\begin{align*}
\|\nu^{\star k}-\pi\|^2\leq \frac14 \sum_{\alpha\in\operatorname{Irr}(G)\backslash \tau}d_\alpha \operatorname{Tr}(\left(\widehat{\nu}(\alpha)^\ast\right)^k\widehat{\nu}(\alpha)^k)\qquad\bullet
\end{align*}

The sum is over non-trivial irreducible representations and $\widehat{\nu}(\alpha)$ is the \emph{Fourier transform of $\nu$ at the representation $\rho^\alpha$}
\begin{align*}
\widehat{\nu}(\alpha)=\sum_{t\in G}\nu(t)\rho^\alpha(t).
\end{align*}

\bigskip

As an application, Diaconis and Shahshahani show that  $k_n=\frac12\,n\ln n$ random transpositions is necessary and sufficient to `mix up' a deck of $n$ cards. This result comprises two elements: an upper bound at $k_n+c\,n$ derived using the Upper Bound Lemma, as well as a lower bound at $k_n-c\,n$ derived using a carefully selected test set. A phenomenon occurs in this example that as $n\rightarrow \infty$, the lower bound for $k_n-c\,n$ tends to one for large $c$, and the upper bound for $k_n+c\,n$ approaches zero for large $c$. Note that $\frac{2cn}{k_n}\rightarrow 0$ so that as $n\rightarrow \infty$, the convergence from far from random to close to random is abrupt. This behaviour, called the \emph{cut-off phenomenon}, occurs in a number of examples of random walks on finite groups. Saloff-Coste \cite{Saloff} is an excellent survey that includes more ideas even than the excellent monograph of Diaconis \cite{DiaconisBook}. The author's MSc thesis \cite{MSc} also comprises a leisurely introduction to the subject.

\bigskip

Finite quantum groups are a noncommutative or `quantum' generalisation of finite groups, and the central result of this work is that the Diaconis--Shahshahani Upper Bound Lemma also holds for finite quantum groups.

\bigskip

Considered as a research programme, quantum probability is concerned with generalising, where possible, objects in the study of classical probability to quantum objects in the study of quantum probability theory. It is under this programme that this work lies: the study of random walks on finite groups uses classical probability theory --- a study of random walks on \emph{quantum} groups \emph{should} be the corresponding area of study in quantum probability.

\bigskip

Therefore, this work is concerned with a generalisation of a generalisation of card shuffling: generalising, where possible, the ideas and results of Diaconis and Shahshahani to the case of finite quantum groups. While the central object of card shuffling --- the set of shuffles --- is generalised to that of a set of elements of a group, the generalisation to \emph{quantum} groups moves away from a `set of points' interpretation.

\bigskip

However, in the study of random walks on finite quantum groups at least, the quantum theory generalises so nicely from the classical setup that it can be fruitful to refer to the \emph{virtual} object that is a quantum group as if it really exists. At the very least this approach gives a most pleasing notation for quantum objects. At its very best it can inspire the noncommutative geometer to make good choices in terms of definitions, etc.

\bigskip

One of the most exciting and potentially lucrative aspects of quantum probability, or rather more specifically quantum group theory, is that theorems about finite groups may in fact be true for quantum groups also. For example, the classical finite Peter--Weyl Theorem  concerning the matrix elements of representations of classical groups is exactly the same as the finite Peter--Weyl Theorem \ref{QPW} for finite \emph{quantum} groups in the sense that replacing in the classical statement `finite group' with `finite quantum group' yields the quantum statement. What this really means is that the classical finite Peter--Weyl Theorem is actually just a special case of the quantum finite Peter--Weyl Theorem.

\bigskip

This is rather comforting on the conceptional level --- these quantum objects behave so much like their `set of points' classical counterparts --- but in this study it is on the pragmatic level of proving results about these quantum objects that this principle really comes to the fore. On the one hand, some theorems concerning the theory of finite groups are just  corollaries to results about quantum groups. On this other, pragmatic, hand, there is a translation principle: any proof of a classical group theorem, written without regard to any of the points in the `set of points', may be directly translatable into a proof of the corresponding quantum group theorem.

\bigskip

For example, the Haar state, $h$, can allow  classical `sum over points' arguments and statements about the algebra of  functions on a finite classical group to be translated:
$$\underbrace{\frac{1}{|G|}\sum_{t\in G}f(t)}_{\text{classical: references points }t\in G}=\underbrace{h(f)}_{\text{quantum: no reference to points}}.$$
Representation Theory and `sum over points' arguments, therefore, are translatable and it is precisely these ideas that play a central role in the representation-theoretic approach of Diaconis and Shahshahani to analysing the rate of convergence of random walks on finite groups \cite{DS}. Once the quantum versions of the various objects and maps used by Diaconis and Shahshahani are established, it is largely straightforward to derive and prove the quantum Diaconis--Shahshahani Upper Bound Lemma.

\bigskip

The restriction to \emph{finite} quantum groups is for two reasons. Firstly the classical work that this work is building upon is the Diaconis--Shahshahani Theory approach to random walks on \emph{finite}  groups. Secondly, the approach to finding the quantum versions of classical objects in this work --- extolled in Section \ref{FQG} --- requires the isomorphism $\mathbb{C}(X\times Y)\cong \mathbb{C}X\otimes\mathbb{C}Y$ for sets $X$ and $Y$, and this holds only when $X$ and $Y$ are finite.

\bigskip

However, the classical Diaconis--Shahshahani Theory has been extended to possibly-infinite compact groups by Rosenthal \cite{Ros}. Freslon \cite{Amaury,Amaury2} has also extended this work on finite quantum groups, developed during the author's PhD studies, to the far more technically-involved  case of compact quantum groups, where the algebras are no longer necessarily finite dimensional. Freslon presents many interesting examples of random walks on compact quantum groups --- with the cut-off phenomenon --- and also addresses, particularly in \cite{Amaury}, a number of concerns not answered or addressed in the author's PhD thesis. As there is a generalisation of the classical finite symmetric group that is infinite dimensional for $n\geq 4$, it would be remiss not to point out that the restriction to \emph{finite} quantum groups is more than a little unnatural, and also brushes many technical difficulties under the carpet.

\bigskip

Therefore this work should be considered a first, pioneering, attempt at a quantum generalisation of the work of Diaconis and Shahshahani, with Freslon's papers a far more accomplished and reaching generalisation.

\bigskip

The paper is organised as follows. In Section 1, category theory language is used to motivate the definition of a finite quantum group. Results concerning the dual of a finite quantum group that are used in the sequel are presented at this point. Finally the definition of a random walk on a finite quantum group is given. In Section 2, the total variation distance, used for measuring the `distance to random' of a random walk, is introduced. The central result is contained in Section 3; this leans so heavily on the work of Van Daele, that the author calls it by Diaconis--Van Daele Theory. In Section 4, two applications of the Upper Bound Lemma are presented: to a random walk on the dual of $S_n$; and to a random walk on the truly quantum groups of Sekine. In neither example is the cut-off phenomenon exhibited, and in the case of the random walk on the quantum groups of Sekine it is shown the cut-off phenomenon does \emph{not} occur.

\section{Preliminaries}

\subsection{Finite Quantum Groups\label{FQG}}
The following approach to introducing finite quantum groups is a brief summary of the approach outlined in detail in Section 2.2 of \cite{PhD}.

\bigskip

A finite classical group $G$ is an object in the category of finite sets, and the group multiplication $G\times G\rightarrow G$,  inclusion of the unit, and inverse $G\rightarrow G$, are morphisms in this category. The three group axioms are given by three commutative diagrams.

 \bigskip

 Let  $\mathbb{C}G$ be the complex vector space with basis elements $\{\delta^s:s\in G\}$. The map $G\mapsto \mathbb{C}G$ is a covariant functor, the $\mathbb{C}$ functor, from the category of finite sets to the category of finite dimensional complex vector spaces. The image of the group multiplication is a multiplication, $\nabla: \mathbb{C}G\otimes \mathbb{C}G\rightarrow \mathbb{C}G$ given by $\nabla(\delta^s\otimes\delta^t)=\delta^{st}$. The presence of the tensor product comes via the isomorphism $\mathbb{C}(G\times G)\cong \mathbb{C}G\otimes \mathbb{C}G$ which does not hold if $G$ is infinite. This gives $\mathbb{C}G$ the structure of a complex associative algebra called the \emph{group algebra of }$G$.  The image of the three commutative diagrams (which comprise the group axioms) under the functor $\mathbb{C}$ give relations that hold in $\mathbb{C}G$, and if $\displaystyle\varphi=\sum_{t\in G}\alpha_t\delta^t$, then the involution
$$\varphi^*=\sum_{t\in G}\overline{\alpha_t}\delta^{t^{-1}}$$
gives the group algebra the structure of a *-algebra.

\bigskip

Now apply the contravariant dual endofunctor to the group algebra. The dual endofunctor maps a finite dimensional complex vector space $U$ to its dual $U^*$, and a morphism $T:U\rightarrow V$ to a morphism $V^*\rightarrow U^*$ given by $f\mapsto f\circ T$. The dual of the group algebra is the algebra of functions on $G$, $F(G)$, with dual basis $\{\delta_s:s\in G\}$ such that $\delta_s(\delta^t)=\delta_{s,t}$, and involution $f^*(\delta^s)=\overline{f(\delta^s)}$.  The composition of the $\mathbb{C}$ functor and the dual endofunctor gives a functor $G\mapsto F(G)$ and the images of the group multiplication, inverse, and inclusion of the unit, under this functor composition give structure maps on $F(G)$. These structure maps are called the \emph{comultiplication}, an algebra homomorphism $\Delta:F(G)\rightarrow F(G)\otimes F(G)$, $\delta_s\mapsto \mathds{1}_{m^{-1}(s)}\equiv \sum_{t\in G}\delta_{st^{-1}}\otimes \delta_t$ (for finite $G$); the counit, $\varepsilon: F(G)\rightarrow \mathbb{C}$, $\delta_{s}\mapsto \delta_{s,e}$; and the antipode, $S:F(G)\rightarrow F(G)$, $\delta_s\mapsto \delta_{s^{-1}}$. The associativity, inverse, and identity group axiom commutative diagrams, under this functor, give \emph{coassociativity}, the \emph{counital property}, and the \emph{antipodal property}:

\begin{align}
(\Delta\otimes I_{F(G)})\circ \Delta&=(I_{F(G)}\otimes \Delta )\circ \Delta\nonumber
\\ (\varepsilon\otimes I_{F(G)})\circ \Delta&=I_{F(G)}= (I_{F(G)}\otimes \varepsilon)\circ \Delta \label{Hopf}
\\ M\circ (S\otimes I_{F(G)})\circ \Delta&=\eta_{F(G)}\circ \varepsilon=M\circ (I_{F(G)}\otimes S)\circ \Delta\nonumber
\end{align}

\bigskip

Here $M:F(G)\otimes F(G)\rightarrow F(G)$ is pointwise multiplication and $\eta_{F(G)}$ is the inclusion of the unit, $\mathds{1}_G$. With the fact that $f^*f=0$ if and only if $f=0$, $F(G)$ can be given the structure of a finite dimensional $\mathrm{C}^*$-algebra. It is a unital commutative algebra and the group axioms are encoded by these relations. Note furthermore that $\Delta$ satisfies $\Delta(f^*)=\Delta(f)^*$, where the involution in $F(G)\otimes F(G)$ is given by $(f\otimes g)^*=f^*\otimes g^*$.

\newpage
A finite dimensional $\mathrm{C}^*$-algebra $A$ with a *-homomorphism $\Delta:A\rightarrow A\otimes A$, and maps $\varepsilon:A\rightarrow \mathbb{C}$, and $S:A\rightarrow A$, that satisfies the above relations, but is not necessarily commutative is thus considered the \emph{algebra of functions on a finite quantum group}. Such an algebra is called a $\mathrm{C}^*$-Hopf algebra.

\begin{definition}
An \emph{algebra of functions on a finite quantum group}, is a unital $\mathrm{C}^*$-Hopf algebra $A$; that is a $\mathrm{C}^*$-algebra $A$ with a *-homomorphism $\Delta$, a counit $\varepsilon$, and an antipode $S$, satisfying the relations (\ref{Hopf}).
\end{definition}
Denote an algebra of functions on a finite quantum group by $A=:F(G)$ and refer to $G$ as a finite quantum group, with unit  denoted by $\mathds{1}_{G}$, and note that $S$ is an involution (not necessarily true for infinite $G$, see remark to Theorem 2.2, \cite{VD2}). Timmermann presents in Chapter 1 further properties of Hopf algebras, for example the fact that the counit is a *-homomorphism (used in the sequel). The simplest noncommutative example of an algebra of functions on a finite quantum group is $\mathbb{C}S_3$, where $S_3$ is the classical symmetric group on three elements, where the comultiplication is given by $\Delta_{\mathbb{C}S_3}(\delta^\sigma)=\delta^\sigma\otimes \delta^\sigma$, and is the dual of the pointwise multiplication in $F(S_3)$. Where $\tau$ is the flip map $a\otimes b\mapsto b\otimes a$, this comultiplication has the property that $\tau\circ \Delta_{\mathbb{C}S_3}=\Delta_{\mathbb{C}S_3}$. Algebras of functions on finite quantum groups, $F(C)$, whose comultiplications have this property, $\tau\circ \Delta=\Delta$ are said to be \emph{cocommutative}, and are of the form $F(C)=\mathbb{C}G$ for $G$ a classical finite group. Similarly a commutative algebras of functions on a finite quantum group is the algebra of functions on some classical finite group (can be inferred from Theorem 3.3, \cite{VK}).

\bigskip

For the remainder of this work, unless explicitly stated otherwise, $G$ is a quantum group, and assumed finite. There are well established notions of compact and locally compact quantum groups. See, for example, Timmermann \cite{Timm} for more.

\subsection{The Dual of a Quantum Group}
Where $\pi$ is the random/uniform distribution on a classical group $G$, consider the functional on $F(G)$:

$$f\mapsto \int_G f(t)\,d\pi(t)=\frac{1}{|G|}\sum_{t\in G}f(t).$$

This is a normalised, positive functional, invariant in the sense that for each $s\in G$,

\begin{align}
 \int_G f(t)\,d\pi(t)&=\int_G f(st)\,d\pi(t)=\int_G f(ts)\,d\pi(t)\nonumber
 \\ \Rightarrow \mathds{1}_G\cdot \int_G f&=\left(I_{F(G)}\otimes \int_G\right)\Delta(f)= \left(\int_G\otimes I_{F(G)}\right)\Delta(f)\,.\,\,\qquad (f\in F(G))\label{invar}
\end{align}

This normalised, invariant, positive functional is called a \emph{Haar state}, it is unique, and is denoted by $\int_G$ (earlier $h$ was used).
\newpage
 A finite \emph{quantum} group also has a unique Haar state (Theorem 1.3, \cite{VD2}), denoted by $\int_G$. Invariance of the Haar state is given by (\ref{invar}) and furthermore it is  tracial: $\int_Gab=\int_Gba$ (Theorem 2.2, \cite{VD2}).

\bigskip

Consider the space, $\widehat{A}$, of linear functionals on $A:=F(G)$ of the form
 $$b\mapsto \int_{G} ba\,.\,\,\qquad (a,\,b\in F(G))$$
As $F(G)$ is finite dimensional, the continuous and algebraic duals coincide. Furthermore,  the Haar state is faithful and so
 $$\langle a,b\rangle:=\int_{G} a^*b$$
  defines an inner product making $F(G)$ a Hilbert space. Via the Riesz Representation Theorem for Hilbert spaces, for every element  $\varphi\in F(G)'$, there exists a density $a_{\varphi}^*\in F(G)$ such that:
  $$\varphi(b)=\langle a_{\varphi}^*,b\rangle=\int_G a_{\varphi}b\,,\,\,\qquad (b\in F(G))$$
  so that $F(G)'=\widehat{A}$. This space can be given the structure of an algebra of functions on a finite quantum group by employing the contravariant dual functor to $F(G)$ and its structure maps. The dual functor maps the object $F(G)$ to $\widehat{A}$; the comultiplication $\Delta:F(G)\rightarrow F(G)\otimes F(G)$ to an associative multiplication $\nabla:\widehat{A}\otimes \widehat{A}\rightarrow \widehat{A}$; and the multiplication $M:F(G)\otimes F(G)\rightarrow F(G)$ to a comultiplication $\widehat{\Delta}: \widehat{A}\rightarrow \widehat{A}\otimes \widehat{A}$ (see \cite{PhD}, Section 2.5 for more). The *-involution on $\widehat{A}$ is given by:

    $$\varphi^*(a)=\overline{\varphi(S(a)^*)}\,.\,\,\qquad (\varphi\in\widehat{A},\,a\in F(G))$$

    The finite quantum group formed in this way is called the \emph{dual} of the quantum group $G$, and is denoted by $\widehat{G}$. An invariant, positive functional on $F(\widehat{G})$ is given by $\widehat{h}(\varphi)=\varepsilon(a_{\varphi})$. In the case of a classical group $G$, $F(\widehat{G})=\mathbb{C}G$.

  \bigskip

   The analogue of a probability distribution on a classical group is a state on the algebra of functions on a quantum group, that is a positive functional of norm one. Denote the states on $F(G)$ by $M_p(G)$ and note that $M_p(G)$ is a subset of $F(\widehat{G})$. A density $a_\nu$ defines a state $\nu\in M_p(G)$ if and only if $a_\nu$ is positive and $\int_G a_\nu=1$. There are Convolution and Plancherel Theorems for $F(\widehat{G})$ that will be used to prove the Upper Bound Lemma.  The Convolution Theorem is well presented by Bhowmick, Skalski, and So{\l}tan (Section 1.1, \cite{Fourier}), which heavily cites Van Daele \cite{VD1,VD2,VD3,VD4}; while the Plancherel Theorem is well presented by Timmermann  (Section 2.3, \cite{Timm}), which also cites Van Daele \cite{VD2}.

   \bigskip

The \emph{convolution product} on $F(G)$ is given by:
 \begin{align*}
 a\ostar  b:&=\left(\int_G\otimes I_{F(G)}\right)\left(\left((S\otimes I_{F(G)})\Delta(b)\right)(a\otimes\mathds{1}_G)\right).
 \end{align*}
 There is a Convolution Theorem relating this convolution to a convolution in $F(\widehat{G})$:
$$\varphi_1\star\varphi_2:=(\varphi_1\otimes\varphi_2)\Delta\,.\,\,\qquad (\varphi_1,\,\varphi_2\in F(\widehat{G}))$$

\bigskip

Within the proof of Proposition 3.11 of \cite{VD3}, Van Daele proves the following identity:
   \begin{align*}
   S\left(\left(I_{F(G)}\otimes \int_G\right)\left(\Delta(a)(\mathds{1}_G\otimes b)\right)\right)=\left(I_{F(G})\otimes\int_G\right)\left((\mathds{1}_G\otimes a)\Delta(b)\right)\,.\,\,\qquad (a,\,b\in F(G))
   \end{align*}
This identity can be used to prove the following (see Section 1.1 of \cite{Fourier}):

 \begin{theorem}
(Van Daele's Convolution Theorem)\label{VDCT}
 For  $\varphi_1,\varphi_2\in F(\widehat{G})$
 $$a_{\varphi_1\star\varphi_2}=a_{\varphi_1}\ostar a_{\varphi_2}\qquad\bullet$$
 \end{theorem}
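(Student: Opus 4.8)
The plan is to use the Riesz representation correspondence $\varphi \leftrightarrow a_\varphi$ established above, which makes the density of a functional on $F(G)$ \emph{unique}. Consequently it suffices to verify that $a_{\varphi_1}\ostar a_{\varphi_2}$ represents the functional $\varphi_1\star\varphi_2$, i.e. that $(\varphi_1\star\varphi_2)(c)=\int_G\left(a_{\varphi_1}\ostar a_{\varphi_2}\right)c$ for every $c\in F(G)$. Writing $a:=a_{\varphi_1}$ and $b:=a_{\varphi_2}$ and recalling $\varphi_i(\cdot)=\int_G a_{\varphi_i}(\cdot)$, the definition $\varphi_1\star\varphi_2=(\varphi_1\otimes\varphi_2)\Delta$ turns the left-hand side into a double Haar integral, while expanding the definition of $\ostar$, right-multiplying by $c$, and integrating the second leg turns the right-hand side into another. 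So the whole theorem reduces to the single identity
\begin{align*}
\left(\int_G \otimes \int_G\right)\left((a \otimes b)\Delta(c)\right) = \left(\int_G \otimes \int_G\right)\left(\left((S \otimes I_{F(G)})\Delta(b)\right)(a \otimes c)\right)\qquad (c\in F(G)).
\end{align*}

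The heart of the matter is that the comultiplication sits on $c$ on the left but on $b$ on the right, and it must be transported from one variable to the other; this is exactly what the Van Daele identity quoted just before the theorem accomplishes. Concretely, I would apply that identity with its ``$a$'' replaced by $c$ (and its ``$b$'' kept as $b$). In Sweedler notation $\Delta(c)=\sum c_{(1)}\otimes c_{(2)}$ this reads $S\left(\sum c_{(1)}\int_G c_{(2)}b\right)=\sum b_{(1)}\int_G c\,b_{(2)}$. Applying the antipode to both sides and using that $S$ is an involution, so $S^2=I_{F(G)}$, converts this into $\sum c_{(1)}\int_G c_{(2)}b=\sum S(b_{(1)})\int_G c\,b_{(2)}$.

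From here the computation closes using traciality of the Haar state, invoked twice. First, traciality gives $\int_G c_{(2)}b=\int_G b\,c_{(2)}$, so the left-hand side of the reduced identity becomes $\int_G a\left(\sum c_{(1)}\int_G b\,c_{(2)}\right)=\int_G a\left(\sum S(b_{(1)})\int_G c\,b_{(2)}\right)$. Second, traciality again, used to reorder $\int_G a\,S(b_{(1)})=\int_G S(b_{(1)})a$ and $\int_G c\,b_{(2)}=\int_G b_{(2)}c$, rewrites this as $\sum\left(\int_G S(b_{(1)})a\right)\left(\int_G b_{(2)}c\right)$, which is precisely the right-hand side of the reduced identity. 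I expect the only real obstacle to be bookkeeping: keeping exact track of which leg carries the comultiplication, placing the antipode on the correct factor, and invoking traciality at precisely the two points where it is needed, since a misplaced $S$ or an unjustified transposition inside $\int_G$ would silently break the chain of equalities.
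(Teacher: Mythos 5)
Your proof is correct and follows exactly the route the paper indicates: the paper does not write out the argument, deferring to Section 1.1 of Bhowmick--Skalski--So{\l}tan, but it explicitly flags the quoted Van Daele identity from Proposition 3.11 of \cite{VD3} as the key ingredient, and your derivation carries that out, using only that identity together with facts the paper has already recorded (faithfulness and traciality of the Haar state, and $S^2=I_{F(G)}$). In effect you have supplied the details the paper leaves to the citation, with no gaps.
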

Timmerman (Lemma 2.3.8, \cite{Timm}) presents a proof of the following lemma.
\begin{lemma}
Let $\varphi_1,\,\varphi_2\in F(\widehat{G})$. Then
\begin{align*} \widehat{h}(\varphi_1\varphi_2)=\varphi_1(S^{-1}(a_{\varphi_2}))\qquad\bullet\end{align*}
\end{lemma}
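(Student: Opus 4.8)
The plan is to reduce the claim, via the construction of the dual quantum group and Van Daele's Convolution Theorem \ref{VDCT}, to a single computation with the counit. First I would observe that the product $\varphi_1\varphi_2$ in the algebra $F(\widehat{G})$ is, by construction of the dual, the convolution $\varphi_1\star\varphi_2=(\varphi_1\otimes\varphi_2)\Delta$: the multiplication $\nabla$ on $\widehat{A}$ is precisely the image of the comultiplication $\Delta$ under the dual functor, which sends $\varphi_1\otimes\varphi_2$ to $(\varphi_1\otimes\varphi_2)\circ\Delta$. Combining the definition $\widehat{h}(\varphi)=\varepsilon(a_\varphi)$ with Theorem \ref{VDCT} then gives
\[
\widehat{h}(\varphi_1\varphi_2)=\varepsilon\left(a_{\varphi_1\star\varphi_2}\right)=\varepsilon\left(a_{\varphi_1}\ostar a_{\varphi_2}\right),
\]
so everything reduces to evaluating $\varepsilon$ on a convolution product in $F(G)$.

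The core of the argument is thus the evaluation of $\varepsilon(a\ostar b)$ for arbitrary $a,b\in F(G)$. Writing $\Delta(b)=\sum b_{(1)}\otimes b_{(2)}$ in Sweedler notation and unwinding the definition of $\ostar$ gives
\[
a\ostar b=\left(\int_G\otimes I_{F(G)}\right)\left(\sum S(b_{(1)})a\otimes b_{(2)}\right)=\sum\left(\int_G S(b_{(1)})a\right)b_{(2)}.
\]
Applying the (linear) counit and pulling the scalars $\varepsilon(b_{(2)})$ back through the antipode and the Haar state, the counital property $(I_{F(G)}\otimes\varepsilon)\Delta=I_{F(G)}$ collapses the sum:
\[
\varepsilon(a\ostar b)=\sum\left(\int_G S(b_{(1)})a\right)\varepsilon(b_{(2)})=\int_G S\left(\sum b_{(1)}\varepsilon(b_{(2)})\right)a=\int_G S(b)\,a.
\]

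Finally I would substitute $a=a_{\varphi_1}$ and $b=a_{\varphi_2}$ and reconcile the order of the factors with the defining relation $\varphi_1(c)=\int_G a_{\varphi_1}c$. Since the Haar state is tracial,
\[
\widehat{h}(\varphi_1\varphi_2)=\int_G S(a_{\varphi_2})\,a_{\varphi_1}=\int_G a_{\varphi_1}\,S(a_{\varphi_2})=\varphi_1\left(S(a_{\varphi_2})\right),
\]
and because $S$ is an involution for a finite quantum group, $S=S^{-1}$, which yields $\varphi_1(S^{-1}(a_{\varphi_2}))$ as required.

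I expect the difficulty here to be bookkeeping rather than depth. The delicate step is the middle computation: correctly threading the scalar factors $\varepsilon(b_{(2)})$ through the antipode and the Haar state so that the counital property can be applied to recover $S(b)$, and then matching the order of the two surviving factors to the convention $\varphi(c)=\int_G a_\varphi c$. It is precisely at this last matching that traciality of $\int_G$ and the identity $S^2=I_{F(G)}$ are indispensable, and the appearance of $S^{-1}$ (rather than $S$) in the statement is explained entirely by the latter.

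\begin{proof}\end{proof}
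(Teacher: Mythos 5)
Your proof is correct, but it is worth noting that the paper does not actually prove this lemma at all: it is quoted verbatim from Timmermann (Lemma 2.3.8 of \cite{Timm}) and used as a black box, so you have supplied a self-contained argument where the paper defers to a reference. Your computation is sound within the paper's setting: identifying the product in $F(\widehat{G})$ with $\star=(\cdot\otimes\cdot)\Delta$ matches the dual-functor construction, Van Daele's Convolution Theorem \ref{VDCT} reduces everything to $\varepsilon(a\ostar b)$, and the Sweedler manipulation
\begin{equation*}
\varepsilon(a\ostar b)=\sum\left(\int_G S(b_{(1)})\,a\right)\varepsilon(b_{(2)})=\int_G S(b)\,a
\end{equation*}
is a clean application of the counital property. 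The one thing to flag is that your final two steps lean on two facts that are special to \emph{finite} quantum groups and are explicitly recorded in the paper for that reason: traciality of $\int_G$ (Theorem 2.2 of \cite{VD2}) to swap $S(a_{\varphi_2})$ past $a_{\varphi_1}$, and $S^2=I_{F(G)}$ to convert $S$ into $S^{-1}$. Timmermann's version of the lemma is stated in a more general algebraic setting where neither holds, which is precisely why the statement carries an $S^{-1}$; your proof is therefore less general than the cited one, but perfectly adequate for every use the paper makes of the lemma (namely the Plancherel Theorem \ref{Planch}, where only the finite case is needed). You correctly diagnose that the appearance of $S^{-1}$ is, in this paper, purely notational.
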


\bigskip

\begin{theorem}
(Plancherel Theorem)\label{Planch}
Let $G$ be a quantum group and $\varphi \in F(\widehat{G})$. Then
$$\widehat{h}\left(\varphi^*\varphi\right)=\int_{G}a_{\varphi}^*a_{\varphi}.$$
\end{theorem}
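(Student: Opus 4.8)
The plan is to reduce the statement to the preceding lemma and then to unwind the definition of the *-involution on $\widehat{A}$, so that the theorem becomes pure bookkeeping with the antipode and the Haar state rather than a fresh computation. First I would apply the preceding lemma with $\varphi_1=\varphi^*$ and $\varphi_2=\varphi$, which gives at once
$$\widehat{h}(\varphi^*\varphi)=\varphi^*\bigl(S^{-1}(a_\varphi)\bigr).$$
Thus everything is reduced to evaluating the right-hand side, and no further appeal to the structure of $F(\widehat{G})$ is needed.

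Next I would substitute the formula $\varphi^*(a)=\overline{\varphi(S(a)^*)}$ for the involution on $\widehat{A}$, taking $a=S^{-1}(a_\varphi)$. Since $S$ is an involution on a finite quantum group, $S^{-1}$ is well defined and $S\bigl(S^{-1}(a_\varphi)\bigr)=a_\varphi$, so the inner antipode cancels and the star lands directly on the density:
$$\varphi^*\bigl(S^{-1}(a_\varphi)\bigr)=\overline{\varphi\bigl(S(S^{-1}(a_\varphi))^*\bigr)}=\overline{\varphi(a_\varphi^*)}.$$
I would then invoke the defining property of the density, $\varphi(b)=\int_G a_\varphi b$, with $b=a_\varphi^*$, to obtain $\varphi(a_\varphi^*)=\int_G a_\varphi a_\varphi^*$.

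Finally I would dispose of the complex conjugate and reorder the product. The element $a_\varphi a_\varphi^*$ is self-adjoint, equivalently $\int_G a_\varphi a_\varphi^*=\langle a_\varphi^*,a_\varphi^*\rangle\geq 0$ is a nonnegative real number, so the conjugation has no effect; traciality of the Haar state (Theorem 2.2 of \cite{VD2}) then interchanges the two factors, yielding
$$\widehat{h}(\varphi^*\varphi)=\overline{\int_G a_\varphi a_\varphi^*}=\int_G a_\varphi a_\varphi^*=\int_G a_\varphi^* a_\varphi,$$
which is the assertion.

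I do not expect a genuine obstacle here once the preceding lemma is taken as given; the proof is essentially a single substitution followed by two cancellations. The only places demanding care are the antipode/involution bookkeeping --- verifying that $S^{-1}$ and $S$ cancel and that the $*$ ends up on $a_\varphi$ and not elsewhere --- and the justification for dropping the conjugation and swapping the order of the product, both of which rest on the positivity and traciality of the Haar state rather than on any additional calculation.
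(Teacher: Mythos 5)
Your proposal is correct and follows essentially the same route as the paper's proof: apply the preceding lemma with $\varphi_1=\varphi^*$, $\varphi_2=\varphi$, unwind the involution on $\widehat{A}$ so that $S$ and $S^{-1}$ cancel, and finish with the density property together with traciality and positivity of the Haar state. The only difference is cosmetic --- you drop the conjugate before reordering the factors, whereas the paper reorders inside the conjugate first --- so there is nothing to add.
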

\begin{proof}
Applying the previous lemma to the left-hand side:
\begin{align*}
\widehat{h}(\varphi^*\varphi)&=\varphi^*\left(S^{-1}(a_{\varphi})\right)= \overline{\varphi\left(S(S^{-1}(a_{\varphi}))^*\right)}
\\&=\overline{\varphi(a_{\varphi}^*)}=\overline{\int_G a_{\varphi}^*a_{\varphi}}=\int a_{\varphi}^*a_{\varphi},
\end{align*}
where the traciality of $\int_G$ was used $\bullet$
\end{proof}

\subsection{Random Walks on Finite Quantum Groups}
 Early work on quantum stochastic processes by various authors led to random walks on duals of compact groups (particularly Biane, see \cite{FG} for references), and other examples, but Franz and Gohm \cite{FG} defined with clarity
a \emph{random walk on a finite quantum group}. Given a random walk on a finite classical group $G$, by applying the functor composition referenced in Section \ref{FQG} to the random variables $\xi_k:G^{k}\rightarrow G$, random variables on the level of $F(G)$, $j_k:F(G)\rightarrow \bigotimes_{k\text{ copies}}F(G)$ can be defined by:
$$j_k(f)=f\circ \delta^{\xi_k},$$
and the generalisation to random walks on finite quantum groups can be made in this way. It is also possible to generalise from classical to quantum random walks using stochastic operators, and also $F(G)$ analogues of the random variables $s_i$ (denoted by $k_i$ by Franz and Gohm and $z_i$ by the author in \cite{PhD}).

\bigskip

 In Section 3.2 of \cite{PhD}, this generalisation of Franz and Gohm from random walks on finite classical groups to random walks on  finite quantum groups is explored in detail, and concludes with Franz and Gohm's assertion that a random walk on a finite quantum group $G$ driven by a state $\nu\in M_p(G)$ is essentially the same thing as the semigroup of convolution powers, $\{\nu^{\star k}\}$, defined inductively through
$$\nu^{\star (k+1)}=(\nu\otimes \nu^{\star k})\circ \Delta.$$
Indeed, Amaury Freslon defines a random walk on a (compact) quantum group simply as a state on $C(G)$, and proceeds to study the semigroup of convolution powers without mentioning the random variables $j_k$ nor $z_i$, only mentioning the stochastic operator \newline $P_\nu=(\nu\otimes I_{F(G)})\circ \Delta$ for analytical reasons (see Section 4.2, \cite{Amaury}).
\section{Total Variation Distance\label{dist}}
In the classical case, the distance between $\nu$ and $\mu \in M_p(G)$ is given by the \emph{total variation distance}:
$$\|\nu-\mu\|:=\sup_{S\subset G}|\nu(\mathds{1}_S)-\mu(\mathds{1}_S)|.$$
Given a subset $S\subset G$, the indicator function on $S$, $\mathds{1}_{S}=\sum_{s\in S}\delta_s$, is a projection, and all projections in $F(G)$ are formed in this way, and so there is a one-to-one correspondence between projections in $F(G)$ and subsets of $G$.
Using this correspondence, this definition can be extended to quantum groups:
$$\|\nu-\mu\|:=\sup_{p\in F(G)\text{ a projection}}|\nu(p)-\mu(p)|.$$

\bigskip

The Haar state is a normal, faithful trace, therefore non-commutative $\mathcal{L}^p$ machinery \cite{LP} can be used to put $p$-norms on $F(G)$:
\begin{align*}
\|a\|_p:=\left(\int_{G}|a|^p\right)^{1/p}\,.\,\,\qquad (a\in F(G))
\end{align*}
Set the infinity norm equal to the operator norm.

\begin{proposition}\label{tvd}
Let $G$ be a quantum group and $\nu,\,\mu\in M_p(G)$:
$$\|\nu-\mu\|=\frac12 \left\|a_\nu-a_\mu\right\|_1.$$
\end{proposition}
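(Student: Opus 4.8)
The plan is to reduce the quantum total variation distance to a concrete optimisation over projections and then identify the optimal projection using the spectral decomposition of the self-adjoint density difference. Set $a := a_\nu - a_\mu$. Since $\nu(p) = \int_G a_\nu\, p$ and $\mu(p) = \int_G a_\mu\, p$ (using that the densities represent these states against the Haar state, and recalling $\int_G$ is tracial so the placement of $p$ is immaterial), the quantity to maximise is
\begin{align*}
|\nu(p) - \mu(p)| = \left| \int_G a\, p \right|.
\end{align*}
Because $a_\nu$ and $a_\mu$ are self-adjoint (each density satisfies $a_\nu^* = a_\nu$, as the associated state is positive and $\int_G$ is faithful), $a$ is a self-adjoint element of the finite-dimensional $\mathrm{C}^*$-algebra $F(G)$, so it admits a spectral decomposition $a = \sum_i \lambda_i e_i$ with real eigenvalues $\lambda_i$ and mutually orthogonal spectral projections $e_i$ summing to $\mathds{1}_G$.

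First I would establish the upper bound $\|\nu - \mu\| \le \tfrac12 \|a\|_1$. For any projection $p$, write $\int_G a\,p = \int_G a_+ p - \int_G a_- p$, where $a = a_+ - a_-$ is the Jordan decomposition into positive and negative parts (supported on the spectral projections for positive and negative eigenvalues respectively). Since $a_\pm \ge 0$ and $0 \le p \le \mathds{1}_G$, each term is controlled by $0 \le \int_G a_\pm p \le \int_G a_\pm$, giving
\begin{align*}
\left| \int_G a\, p \right| \le \max\left\{ \int_G a_+,\ \int_G a_- \right\} \le \int_G a_+ = \int_G a_-,
\end{align*}
where the last equality uses $\int_G a = \int_G a_\nu - \int_G a_\mu = 1 - 1 = 0$, forcing $\int_G a_+ = \int_G a_-$. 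Since $\|a\|_1 = \int_G |a| = \int_G a_+ + \int_G a_-= 2\int_G a_+$, this yields the bound $\tfrac12\|a\|_1$.

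Second I would prove the bound is attained, so the supremum is an equality. The natural candidate is $p_+ := $ the spectral projection of $a$ onto its strictly positive part (the sum of those $e_i$ with $\lambda_i > 0$), which is a genuine projection in $F(G)$ and hence, by the correspondence discussed in Section \ref{dist}, a legitimate competitor in the supremum. For this choice $\int_G a\, p_+ = \int_G a_+ = \tfrac12\|a\|_1$, matching the upper bound. I would note that the existence of $p_+$ as an element of $F(G)$ is exactly where finite-dimensionality and the $\mathrm{C}^*$-structure are used: the continuous functional calculus applied to the self-adjoint $a$ produces the spectral projections inside the algebra.

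The main obstacle is less the inequality chain than making sure the classical argument is faithfully translated into the noncommutative setting without any appeal to ``points'' of $G$. Concretely, the care-points are: justifying $\nu(p) = \int_G a_\nu p$ with $p$ inserted correctly (handled by traciality of $\int_G$); confirming that the densities are self-adjoint so that $a$ is self-adjoint and the spectral/Jordan decomposition is available; and verifying that $p_+$ really lies in $F(G)$ and is a projection corresponding to an admissible ``event.'' Once these are in place, the proof is a direct noncommutative analogue of the classical fact that total variation distance is half the $L^1$-distance of densities, with the optimal set replaced by the positivity spectral projection of the density difference.
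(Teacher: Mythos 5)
Your proof is correct, but it takes a genuinely different route from the paper's. The paper first invokes noncommutative $\mathcal{L}^p$ duality to rewrite $\|a_\nu-a_\mu\|_1$ as $\sup_{\|\phi\|_\infty\leq 1}|\nu(\phi)-\mu(\phi)|$, embeds $F(G)$ in a matrix algebra, uses the trace-norm H\"older inequality and a polar decomposition $\theta=U|\theta|$ with self-adjoint unitary phase $U$ to show the supremum is attained at $U$, and only then manufactures the optimal projection as $q=\tfrac12(\mathds{1}_G+U)$; the reverse inequality comes from testing against $\phi=2p-\mathds{1}_G$. You instead work directly with the self-adjoint difference $a=a_\nu-a_\mu$, use its Jordan decomposition $a=a_+-a_-$ together with $\int_G a=0$ to bound $|\int_G ap|$ by $\int_G a_+=\tfrac12\|a\|_1$ for every projection $p$ (the step $0\leq\int_G a_\pm p\leq\int_G a_\pm$ deserves the one-line justification $\int_G a_\pm(\mathds{1}_G-p)=\int_G a_\pm^{1/2}(\mathds{1}_G-p)a_\pm^{1/2}\geq 0$ via traciality, but this is routine), and then exhibit the positive spectral projection $p_+$ as the maximiser. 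Your argument is more elementary and self-contained --- it needs neither the duality formula nor the matrix-algebra embedding, and it identifies the optimal ``event'' explicitly as the quantum analogue of the set $\{a_\nu>a_\mu\}$. What the paper's longer route buys is the intermediate identity $\|\nu-\mu\|=\tfrac12\sup_{\|\phi\|_\infty\leq 1}|\nu(\phi)-\mu(\phi)|$, recorded as equation (\ref{lb}) and needed later for the Lower Bound Lemma; if you adopt your proof you would still have to extract that second supremum presentation separately (though it follows easily from the equality you prove plus the $\phi=2p-\mathds{1}_G$ trick in one direction and the H\"older bound in the other).
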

\begin{proof}
Standard non-commutative $\mathcal{L}^p$ machinery from \cite{LP} shows that:
\begin{align*}
\left\|a_\nu-a_\mu\right\|_1=\sup_{\phi\in F(G):\|\phi\|_\infty\leq 1}\left|\int_G ((a_\nu-a_\mu)\phi)\right|=\sup_{\|\phi\|_{\infty}\leq 1}\left|\nu(\phi)-\mu(\phi)\right|.
\end{align*}

\bigskip

Embed the algebra of functions $F(G)$ into a matrix algebra. As everything is in finite dimensions, the supremum on the right is actually a maximum. Suppose that the two states $\nu$ and $\mu$ are given by density matrices: $\nu=\operatorname{Tr}(\rho_{\nu}\cdot)$ and $\mu=\operatorname{Tr}(\rho_{\mu}\cdot)$, and $\theta:=\rho_{\nu}-\rho_{\mu}$ so that
$$\|a_\nu-a_\mu\|_1=\max_{\|\phi\|_{\infty}\leq 1}|\operatorname{Tr}((\rho_{\nu}-\rho_{\mu})\phi)|=\max_{\|\phi\|_{\infty}\leq 1}|\operatorname{Tr}(\theta\phi)|.$$
Using the $M_n(\mathbb{C})$ inequality ($\|A\|_1= \operatorname{Tr}(|A|)$, $|A|:=\sqrt{A^*A}$), $|\operatorname{Tr}(AB)|\leq \|A\|_1\|B\|_\infty,$
\begin{align*}
  \|a_\nu-a_\mu\|_1 & = \max_{\|\phi\|_\infty\leq 1}|\operatorname{Tr}(\theta \phi)| \\
   & \leq  \max_{\|\phi\|_\infty\leq 1}\|\theta\|_1\|\phi\|_\infty=\|\theta\|_1.
\end{align*}
Note that $\theta$ is self-adjoint so that there exists a polar decomposition $\theta=U|\theta|$ such that $U$ is self-adjoint. Consider $\phi=U$, the phase of $\theta$, so that $U\theta=|\theta|$, and so
$$\operatorname{Tr}(\theta U)=\operatorname{Tr}(|\theta|)=\|\theta\|_1,$$
that is equality is achieved at the self-adjoint unitary $U$:
$$\|a_\mu-a_\nu\|_1=|\nu(U)-\mu(U)|.$$

\bigskip

Consider $q=\frac{\mathds{1}_G+U}{2}$. The self-adjointness of $U$ gives $q=q^*$, and the fact $U$ is also a unitary implies that $U$ is an involution. This implies that $q=q^2$ and so $q$ is a projection. The projection distance satisfies
\begin{align*}
\|\nu-\mu\|\geq |\nu(q)-\mu(q)|=\frac{1}{2}|\nu(U)-\mu(U)|=\frac12 \|a_\nu-a_\mu\|_1.
\end{align*}

\bigskip

Now suppose the projection distance is attained at $p$ and consider the infinity-norm-one element $\phi=2p-\mathds{1}_G \in F(G)$:
\begin{align*}
\|a_\nu-a_\mu\|_1&\geq |\nu(2p-\mathds{1}_G)-\mu(2p-\mathds{1}_G)|=2|\nu(p)-\mu(p)|=2\|\mu-\nu\|
\\ \Rightarrow \|\nu-\mu\|&\leq \frac12 \|a_\nu-a_\mu\|_1\qquad \bullet
\end{align*}
\end{proof}

\bigskip

This proof takes advantage of the finite dimensionality of $F(G)$. Amaury Freslon (\cite{Amaury}, Lemma 2.6) states and proves a similar result for $\left\|\nu-\int_G\right\|$ which holds for not-necessarily finite-dimensional compact quantum groups whenever $\nu$ is a state on $\mathcal{L}^{\infty}(G)$ that has an $\mathcal{L}^1(G)$-density $a_\nu$, which is not always the case in infinite dimensions.

\bigskip

The total variation distance defined using projections has three properties that suggest it is a suitable generalisation of the classical variation distance.

\bigskip

Firstly, in the case of a classical group, $G$, the one-to-one correspondence between projections and subsets, and between probability measures on $G$ and states on $F(G)$, means that there is agreement in the classical case. Furthermore, there is a one-to-one correspondence between representations of a classical group $G$ and \emph{co}representations of $F(G)$ and this implies that estimates of the distance to random are the same whether the classical or quantum Upper Bound Lemmas are used.

\bigskip

The Upper Bound Lemma is a formula for a two norm, and thus requires a Cauchy--Schwarz inequality to relate back to the total variation distance. Such an inequality for total variation distance is provided for by non-commutative $\mathcal{L}^p$ machinery (see \cite{LP}):
$$\|ab\|_1\leq \|a\|_2\|b\|_2\,,\,\,\qquad (a,\,b\in F(G))$$
applied to
\begin{align*}
\|\nu-\mu\|^2&=\frac14 \|a_\nu-a_\mu\|_1^2=\frac{1}{4}\|(a_\nu-a_\mu)\mathds{1}_G\|_1^2
\\ &\leq \frac14 \|a_\nu-a_\mu\|_2^2\|\mathds{1}_G\|_2^2=\frac14 \,\widehat{h}\left((\nu-\mu)^*\star(\nu-\mu)\right)
\end{align*}
The final equality here is the Plancherel Theorem \ref{Planch}.

\bigskip

Finally, to establish lower bounds, the total variation distance has a presentation(s) as a supremum:

\begin{equation}\|\nu-\mu\|=\sup_{p\in F(G)\text{ a projection}}|\nu(p)-\mu(p)|=\frac12\,\sup_{\|\phi\|_{\infty}\leq 1}\left|\nu(\phi)-\mu(\phi)\right|\label{lb}\end{equation}

\section{Diaconis--Van Daele Theory}
\subsection{Corepresentation Theory}
\begin{definition}
A \emph{corepresentation} of the algebra of functions on a quantum group $G$ on a complex vector space $V$ is a linear map $\kappa:V\rightarrow V\otimes F(G)$ that satisfies:
$$(\kappa\otimes I_{F(G)})\circ \kappa=(I_V\otimes\Delta)\circ\kappa\qquad\text{and}\qquad(I_V\otimes \varepsilon)\circ \kappa=I_V.$$
\end{definition}
If $V$ is equipped with a Hermitian inner product, $\langle,\rangle_V$, define for $v,w\in V$, and \newline $a,b\in F(G)$
$$\langle v\otimes a,w\otimes b\rangle_{F(G)}=\langle v,w\rangle_V\cdot a^*b.$$
A representation $\kappa$ on such a vector space is said to be \emph{unitary} if $\langle \kappa(v),\kappa(w)\rangle_{F(G)}=\langle v,w\rangle_V\cdot \mathds{1}_G$.

\bigskip

Let $\kappa$ be a corepresentation on a vector space $V$. Denote by $\overline{V}$ the conjugate vector space of $V$ and by $v\mapsto \overline{v}$ the canonical conjugate-linear isomorphism. Since $\Delta$ and $\varepsilon$ are $*$-homomorphisms, the map
$$\overline{\kappa}:\overline{V}\rightarrow \overline{V}\otimes F(G)\,,\,\,\,\overline{e_j}\mapsto \sum \overline{e_i}\otimes \rho_{ij}^*,$$
is a representation again, called the \emph{conjugate} of $\kappa$.

\bigskip

Denoting the map $a_\varphi\mapsto \varphi$ by $\mathcal{F}$, so that  $\mathcal{F}(a_\varphi)=\varphi$, for each $a_\varphi\in F(G)$ define the map $\widehat{a_\varphi}\in L(\overline{V})$:
$$\widehat{a_\varphi}(\kappa)=(I_{\overline{V}}\otimes \mathcal{F}(a_\varphi))\circ \overline{\kappa}=(I_{\overline{V}}\otimes \varphi)\circ \overline{\kappa}.$$
\begin{proposition}
\label{conv1}
For $a_{\varphi_1},\,a_{\varphi_2}\in F(G)$ and $\kappa$ a representation of $G$ on $V$
$$\widehat{a_{\varphi_1\star \varphi_2}}(\kappa)=\widehat{a_{\varphi_1}}(\kappa)\circ \widehat{a_{\varphi_2}}(\kappa).$$
Furthermore, if $\kappa$ is unitary then
\begin{equation}\widehat{a_{\varphi^*}}(\kappa)=\widehat{a_\varphi}(\kappa)^*.\label{star}\end{equation}
Note the first involution is in $F(\widehat{G})$ and the second is in $L(\overline{V})$.
\end{proposition}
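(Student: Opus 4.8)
The plan is to reduce both identities to a comparison of matrix entries, exactly mirroring the classical Diaconis--Shahshahani computation. First I would fix an orthonormal basis $\{e_j\}$ of $V$ and write $\kappa(e_j)=\sum_i e_i\otimes\rho_{ij}$, so that $\overline{\kappa}(\overline{e_j})=\sum_i\overline{e_i}\otimes\rho_{ij}^*$ and the operator $\widehat{a_\varphi}(\kappa)=(I_{\overline{V}}\otimes\varphi)\circ\overline{\kappa}$ has, in the basis $\{\overline{e_j}\}$, the matrix entries $\big(\widehat{a_\varphi}(\kappa)\big)_{ij}=\varphi(\rho_{ij}^*)$. The two corepresentation axioms then translate into the familiar ``representation'' relations $\Delta(\rho_{ij})=\sum_k\rho_{ik}\otimes\rho_{kj}$ and $\varepsilon(\rho_{ij})=\delta_{ij}$: the former by equating the $F(G)$-components of $(\kappa\otimes I_{F(G)})\circ\kappa=(I_V\otimes\Delta)\circ\kappa$ applied to $e_j$, the latter directly from $(I_V\otimes\varepsilon)\circ\kappa=I_V$.

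For the multiplicativity statement I would compute the $(i,j)$ entry of the left-hand side. Using $\varphi_1\star\varphi_2=(\varphi_1\otimes\varphi_2)\circ\Delta$ and that $\Delta$ is a $*$-homomorphism, so $\Delta(\rho_{ij}^*)=\Delta(\rho_{ij})^*=\sum_k\rho_{ik}^*\otimes\rho_{kj}^*$, one obtains
$$\big(\widehat{a_{\varphi_1\star\varphi_2}}(\kappa)\big)_{ij}=(\varphi_1\otimes\varphi_2)\big(\Delta(\rho_{ij}^*)\big)=\sum_k\varphi_1(\rho_{ik}^*)\,\varphi_2(\rho_{kj}^*).$$
The right-hand side $\widehat{a_{\varphi_1}}(\kappa)\circ\widehat{a_{\varphi_2}}(\kappa)$ has $(i,j)$ entry $\sum_k\varphi_1(\rho_{ik}^*)\varphi_2(\rho_{kj}^*)$ by ordinary matrix multiplication, so the two agree. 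This part is essentially bookkeeping once the comultiplication formula is in hand.

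The unitary statement is where the real work lies, and the main obstacle is identifying the antipode on the matrix elements. Unwinding unitarity through $\langle\kappa(e_i),\kappa(e_j)\rangle_{F(G)}=\delta_{ij}\mathds{1}_G$ gives $\sum_k\rho_{ki}^*\rho_{kj}=\delta_{ij}\mathds{1}_G$, i.e. the matrix $u=(\rho_{ij})$ is unitary over $F(G)$. On the other hand, the antipodal axiom together with $\varepsilon(\rho_{ij})=\delta_{ij}$ yields $\sum_k S(\rho_{ik})\rho_{kj}=\delta_{ij}\mathds{1}_G$, so $(S(\rho_{ij}))$ is a left inverse of $u$; since $u$ is invertible, left inverses are unique, forcing $S(\rho_{ij})=\rho_{ji}^*$. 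I would then use that $S$ is an involution: in a Hopf $*$-algebra $S(a^*)=S^{-1}(a)^*$, so $S^2=\mathrm{id}$ gives $S(a^*)=S(a)^*$, whence $S(\rho_{ij}^*)=\rho_{ji}$.

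Finally I would combine these with the involution on $F(\widehat{G})$, namely $\varphi^*(a)=\overline{\varphi(S(a)^*)}$. Evaluating at $a=\rho_{ij}^*$ gives $S(\rho_{ij}^*)^*=\rho_{ji}^*$, hence $\big(\widehat{a_{\varphi^*}}(\kappa)\big)_{ij}=\varphi^*(\rho_{ij}^*)=\overline{\varphi(\rho_{ji}^*)}$. Since the adjoint in $L(\overline{V})$ is the conjugate transpose relative to the orthonormal basis $\{\overline{e_j}\}$, the $(i,j)$ entry of $\widehat{a_\varphi}(\kappa)^*$ is $\overline{(\widehat{a_\varphi}(\kappa))_{ji}}=\overline{\varphi(\rho_{ji}^*)}$, and the two match entrywise, proving (\ref{star}). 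I expect the only genuinely delicate point to be the identification $S(\rho_{ij})=\rho_{ji}^*$, as it hinges on both unitarity and uniqueness of inverses in $M_n(F(G))$; the $*$-compatibility of $S$ can be quoted from the Hopf $*$-algebra formalism referenced in the preliminaries.
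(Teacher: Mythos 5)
Your argument is correct. Be aware, though, that the paper does not actually carry out this computation: its ``proof'' of Proposition \ref{conv1} consists of the observation that the claim amounts to saying $\varphi\mapsto\widehat{a_\varphi}(\kappa)$ is a $*$-homomorphism $F(\widehat{G})\rightarrow L(\overline{V})$, followed by a citation of Timmermann (Proposition 3.1.7 ii.\ and v.). Your entrywise computation is essentially the standard argument hiding behind that citation, so in substance the two routes coincide; yours simply makes explicit the identities $\Delta(\rho_{ij})=\sum_k\rho_{ik}\otimes\rho_{kj}$, $\varepsilon(\rho_{ij})=\delta_{ij}$ and, for the unitary case, $S(\rho_{ij})=\rho_{ji}^*$. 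One small point deserves care: from unitarity you only obtain $u^*u=I$ for $u=(\rho_{ij})$, and in a general unital algebra a left-invertible element need not be invertible, so your appeal to ``left inverses are unique'' needs justification. Here it is harmless because the matrix algebra over $F(G)$ is finite dimensional, hence Dedekind-finite; alternatively, and more cleanly, the antipodal axiom is two-sided, so $(S(\rho_{ij}))$ is already a two-sided inverse of $u$, and then $u^*u=I$ forces $u^{-1}=u^*$ and $S(\rho_{ij})=\rho_{ji}^*$ with no finiteness appeal at all. The $*$-compatibility $S(a^*)=S(a)^*$ that you quote does follow from $S^2=\operatorname{id}$ together with the Hopf $*$-algebra identity $S(a^*)=S^{-1}(a)^*$, and the paper explicitly records that $S$ is an involution for finite quantum groups, so that step is safely available.
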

\begin{proof}
The proposition is that the map $F(\widehat{G})\rightarrow L(\overline{V})$, $\varphi\mapsto \widehat{a_\varphi}(\kappa)$ is a $*$-homomorphism. See Timmermann for a proof (Proposition 3.1.7 ii. and v., \cite{Timm}) $\bullet$

\end{proof}
The following result is presented by Timmermann (Proposition 3.29 and Theorem 3.2.12, \cite{Timm}), but is due to Woronowicz \cite{Pseudogroups}
\newpage

\begin{theorem}
\emph{(Finite Quantum Peter--Weyl Theorem)}\label{QPW}
Let $\mathcal{I}=\operatorname{Irr}(G)$ be an index set for a family of pairwise-inequivalent irreducible unitary representations of $G$. If $d_{\alpha}$ is the dimension of the vector space on which $\rho^\alpha$ acts ($\alpha\in\mathcal{I}$),
$$\left\{\rho_{ij}^\alpha\,|\,i,j=1\dots d_\alpha,\,\alpha\in \mathcal{I}\right\},$$
the set of matrix elements of $\mathcal{I}$, is an orthogonal basis of $F(G)$ with respect to the inner product
\begin{align*}
\langle a,b\rangle:=\int_{G}a^*b\,,\,\,\qquad (a,\,b\in F(G))
\end{align*}
such that $\langle \rho_{ij}^\alpha,\rho_{ij}^\alpha \rangle =1/d_{\alpha}\qquad\bullet$
\end{theorem}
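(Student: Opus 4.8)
The plan is to prove the Finite Quantum Peter--Weyl Theorem by establishing the orthogonality relations for matrix elements of irreducible unitary corepresentations, using the invariance and traciality of the Haar state as the central tool. The key idea, which is the quantum translation of the classical Schur orthogonality argument, is to manufacture an intertwiner between two irreducible corepresentations by averaging an arbitrary linear map against the Haar state, and then to invoke a quantum Schur's Lemma to conclude that this intertwiner is either zero (for inequivalent representations) or a scalar (for equivalent ones).

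Concretely, first I would fix irreducible unitary corepresentations $\rho^\alpha$ and $\rho^\beta$ acting on $V_\alpha$ and $V_\beta$ with matrix elements $\rho^\alpha_{ij}$ and $\rho^\beta_{kl}$. Given any linear map $T:V_\beta\to V_\alpha$, I would form the averaged map
\begin{align*}
\widetilde{T}=\left(I\otimes \textstyle\int_G\right)\bigl(\rho^\alpha\,(T\otimes \mathds{1}_G)\,(\rho^\beta)^{*}\bigr),
\end{align*}
and check, using the corepresentation identity $(\kappa\otimes I)\circ\kappa=(I\otimes\Delta)\circ\kappa$ together with the left/right invariance of $\int_G$ in the form (\ref{invar}), that $\widetilde{T}$ intertwines $\rho^\beta$ and $\rho^\alpha$. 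By quantum Schur's Lemma, $\widetilde{T}=0$ when $\alpha\neq\beta$ (inequivalent irreducibles) and $\widetilde{T}=\lambda I$ when $\alpha=\beta$. Choosing $T$ to be the elementary matrix units $E_{jk}$ and reading off matrix entries would then yield $\int_G (\rho^\alpha_{ij})^{*}\rho^\beta_{kl}=0$ for $\alpha\neq\beta$ and a relation of the form $\int_G(\rho^\alpha_{ij})^{*}\rho^\alpha_{kl}=c\,\delta_{ik}\delta_{jl}$ in the equivalent case. The trace $\lambda$ is pinned down by contracting indices and using that $\int_G \mathds{1}_G=1$, giving the common value $c=1/d_\alpha$ after exploiting unitarity to relate $(\rho^\alpha_{ij})^{*}$ to the entries of the inverse (conjugate) corepresentation.

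Having the orthogonality relations, the orthogonality statement in the theorem is immediate, and linear independence of the $\{\rho^\alpha_{ij}\}$ follows because mutually orthogonal nonzero vectors are linearly independent. To upgrade to a \emph{basis}, I would invoke completeness: the span of all matrix elements is a $*$-subalgebra of $F(G)$ that is invariant under $\Delta$, and by finite-dimensionality together with the regular corepresentation decomposing into irreducibles, every element of $F(G)$ is a matrix element of some (reducible) unitary corepresentation and hence lies in this span. This decomposition-of-the-regular-corepresentation step is exactly where the unimodularity afforded by the \emph{finite} (tracial Haar state) hypothesis is used, and it is where I would lean directly on the cited results of Woronowicz and Timmermann rather than reproving the full structure theory.

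The main obstacle I anticipate is the normalisation $\langle\rho^\alpha_{ij},\rho^\alpha_{ij}\rangle=1/d_\alpha$ being uniform across all matrix elements and all irreducibles. In the general compact quantum group setting this fails: the orthogonality relations carry a modular/$F$-matrix weighting, and the squared norms need not be $1/d_\alpha$ nor even equal within a single $\rho^\alpha$. What rescues the clean classical-looking statement here is precisely that the Haar state is \emph{tracial} (Theorem 2.2 of \cite{VD2}, as noted in the excerpt), which forces the modular element to be trivial and collapses the weighted orthogonality relations to the unweighted form. I would therefore be careful to isolate the point at which traciality is invoked, since it is the single hypothesis distinguishing this finite-dimensional statement from its more delicate compact analogue, and any honest proof must make that dependence explicit rather than smuggling it in.
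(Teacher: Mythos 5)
The paper gives no proof of this theorem: it is stated with a bullet and a citation to Timmermann and Woronowicz, and your outline is essentially the argument those sources give --- averaging an arbitrary map against the Haar state to manufacture an intertwiner, Schur's Lemma to split the inequivalent and equivalent cases, and completeness via decomposing the regular corepresentation $\Delta$ into irreducibles. Your isolation of the traciality of the Haar state as the hypothesis that collapses the modular ($F$-matrix) weighting to the uniform normalisation $1/d_\alpha$ is exactly the right point to flag, so the proposal is correct and matches the cited proof.
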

\subsection{Fourier Transform}
The following definition is similar to that of Simeng Wang ((2.5), \cite{Simeng}) save for a choice of left-right. As remarked upon by Simeng Wang, his definition is similar to earlier definitions of Kahng and also Caspers save for the presence of the conjugate representation $\overline{\kappa}_\alpha$ rather than $\kappa_{\alpha}$ itself. As Wang explains, the conjugate representation is used to be compatible with standard definitions in classical analysis on compact groups and hence most welcome for this work.
\begin{definition}
(The Fourier Transform)
Let $G$ be a quantum group with representation notation as before. Then the \emph{Fourier transform} is a map:
$$F(G)\rightarrow \bigoplus_{\alpha\in\operatorname{Irr}(G)}L(\overline{V}_{\alpha})\,,\,\,a_\varphi\mapsto \widehat{a_\varphi},$$
defined for each $\alpha\in \operatorname{Irr}(G)$:
$$\widehat{a_\varphi}(\alpha)=(I_{\overline{V}}\otimes \mathcal{F}(a_\varphi))\circ \overline{\kappa_\alpha}=\left(I_{\overline{V_\alpha}}\otimes \varphi\right)\circ \overline{\kappa_\alpha}.$$
Each $\widehat{a_\varphi}(\alpha)$ is called the \emph{Fourier transform of $a_\varphi$ at the representation $\kappa_\alpha$}.
For $\varphi\in F(\widehat{G})$:
$$\widehat{\varphi}(\alpha):=\widehat{a_\varphi}(\alpha)=(I_{\overline{V_\alpha}}\otimes \varphi)\circ\overline{\kappa_\alpha},$$
so $\widehat{\varphi}$ is identified with $\widehat{a_\varphi}$.
\end{definition}
The maps $\{\widehat{a_\varphi}(\alpha)\,:\,\alpha\in\operatorname{Irr}(G)\}$ play a key role in the sequel.

\bigskip
\begin{theorem}
(Diaconis--Van Daele Inversion Theorem)\label{DVDIT}
Let $\varepsilon$ be the counit of a quantum group $G$ and $\varphi\in F(\widehat{G})$. Then
\begin{align*}\widehat{h}(\varphi):=\varepsilon\left(a_\varphi\right)=\sum_{\alpha\in\text{Irr}(G)} d_\alpha \operatorname{Tr}\left(\widehat{a_{\varphi}}\left(\alpha\right)\right).\end{align*}
where the sum is over the irreducible unitary representations of $F(G)$.
\end{theorem}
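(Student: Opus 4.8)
The plan is to expand the density $a_\varphi$ in the orthogonal basis of matrix elements supplied by the Finite Quantum Peter--Weyl Theorem \ref{QPW}, apply the counit $\varepsilon$ term by term, and then recognise what survives as the right-hand side. Because $\varepsilon$ is linear, the whole computation reduces to knowing the value of $\varepsilon$ on each matrix element $\rho_{ij}^\alpha$ together with the coefficients of $a_\varphi$ in the basis, and separately to an explicit evaluation of the trace $\operatorname{Tr}(\widehat{a_\varphi}(\alpha))$ from the definition of the Fourier transform.

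First I would write $a_\varphi=\sum_{\alpha,i,j}c_{ij}^\alpha\rho_{ij}^\alpha$ and extract the coefficients by orthogonality: pairing against $\rho_{ij}^\alpha$ and using $\langle\rho_{ij}^\alpha,\rho_{ij}^\alpha\rangle=1/d_\alpha$ gives $c_{ij}^\alpha=d_\alpha\langle\rho_{ij}^\alpha,a_\varphi\rangle=d_\alpha\int_G(\rho_{ij}^\alpha)^*a_\varphi$. Next I would evaluate $\varepsilon$ on a single matrix element. Writing $\kappa_\alpha(e_j)=\sum_i e_i\otimes\rho_{ij}^\alpha$, the counital property $(I_V\otimes\varepsilon)\circ\kappa_\alpha=I_V$ forces $\varepsilon(\rho_{ij}^\alpha)=\delta_{ij}$. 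Applying $\varepsilon$ to the expansion therefore collapses everything to the diagonal, giving $\varepsilon(a_\varphi)=\sum_\alpha\sum_i c_{ii}^\alpha=\sum_\alpha d_\alpha\sum_i\int_G(\rho_{ii}^\alpha)^*a_\varphi$.

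In parallel I would compute the trace on the right directly. Since $\widehat{a_\varphi}(\alpha)=(I_{\overline{V_\alpha}}\otimes\varphi)\circ\overline{\kappa_\alpha}$ and $\overline{\kappa_\alpha}(\overline{e_j})=\sum_i\overline{e_i}\otimes(\rho_{ij}^\alpha)^*$, the operator $\widehat{a_\varphi}(\alpha)$ has $(i,j)$ matrix entry $\varphi((\rho_{ij}^\alpha)^*)$, so that $\operatorname{Tr}(\widehat{a_\varphi}(\alpha))=\sum_j\varphi((\rho_{jj}^\alpha)^*)=\sum_j\int_G a_\varphi(\rho_{jj}^\alpha)^*$, using $\varphi(b)=\int_G a_\varphi b$. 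Thus $d_\alpha\operatorname{Tr}(\widehat{a_\varphi}(\alpha))=d_\alpha\sum_j\int_G a_\varphi(\rho_{jj}^\alpha)^*$.

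It then remains to match the two expressions, and this is where the one genuine subtlety lies: the counit side produces $\int_G(\rho_{ii}^\alpha)^*a_\varphi$ while the Fourier side produces $\int_G a_\varphi(\rho_{jj}^\alpha)^*$, with the density and the adjoint matrix element in the opposite order. The two are reconciled by the traciality of the Haar state, $\int_G ab=\int_G ba$, recorded after (\ref{invar}). With this, $d_\alpha\operatorname{Tr}(\widehat{a_\varphi}(\alpha))=\sum_j c_{jj}^\alpha$ for each $\alpha$, and summing over $\alpha\in\operatorname{Irr}(G)$ recovers $\varepsilon(a_\varphi)$ exactly. I expect the only step needing care to be this appeal to traciality, since it is precisely the feature distinguishing the quantum Haar state from an arbitrary positive functional and is what makes the conjugate-representation convention built into the Fourier transform interact correctly with the counit.
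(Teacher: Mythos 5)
Your proposal is correct and follows essentially the same route as the paper: the paper reduces by linearity to the case $a_\varphi=\rho_{kl}^\beta$ and then invokes Peter--Weyl orthogonality together with the counital property, which is just the basis-element form of your expansion argument. Your explicit appeal to traciality of the Haar state to reconcile $\int_G(\rho_{jj}^\alpha)^*a_\varphi$ with $\int_G a_\varphi(\rho_{jj}^\alpha)^*$ is a point the paper passes over silently, so it is worth having flagged.
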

\begin{proof}
Both sides are linear in $a_\varphi$ so it suffices to check $a_{\varphi}=\rho_{kl}^{\beta}$ for $\beta\in\text{Irr}(G)$. The left-hand side reads
$$\varepsilon\left(\rho_{kl}^\beta\right)=\delta_{k,l}.$$
To calculate the right-hand-side, calculate for a given representation the trace of $\widehat{\rho_{kl}^\beta}\left(\alpha\right)$. Let $\overline{e_j}\in \overline{V_\alpha}$ and calculate
\begin{align*}
\widehat{\rho_{kl}^{\beta}}\left(\alpha\right)\overline{e_j}&=\left(I_{\overline{V_\alpha}}\otimes\mathcal{F}\left(\rho_{kl}^\beta\right)\right)\sum_{i} \overline{e_i}\otimes\left(\rho_{ij}^\alpha\right)^*
\\&=\sum_i \mathcal{F}\left(\rho_{kl}^\beta\right)\left(\rho_{ij}^{\alpha}\right)^* \overline{e_i}=\sum_i \int_{G}\left(\left(\rho_{ij}^{\alpha}\right)^*\rho_{kl}^\beta\right) \overline{e_i}
\end{align*}
This is zero unless $\alpha\equiv \beta$. If $\alpha\equiv\beta$ then
\begin{align*}
\widehat{\rho_{kl}^{\beta}}\left(\alpha\right)\overline{e_j}&=\int_{G}\left(\left(\rho_{kj}^\beta\right)^*\rho_{kl}^\beta\right)\cdot\overline{e_k}=\frac{1}{d_\beta}\delta_{j,l}\overline{e_k}
\\ \Rightarrow \operatorname{Tr}\left(\widehat{\rho_{kl}^\beta}(\beta)\right)&=\sum_j\langle \overline{e_j},\widehat{\rho_{kl}^\beta}(\beta)\overline{e_j}\rangle_{\overline{V_\beta}}=\sum_j \left\langle \overline{e_j},\frac{1}{d_\beta}\delta_{j,l}\overline{e_k}\right\rangle=\frac{1}{\delta_\beta}\delta_{k,l}.
\end{align*}

Multiply this by $d_\beta$ to get $\delta_{k,l}$ $\bullet$
 \end{proof}

 \bigskip

 \begin{theorem}
(Diaconis--Van Daele Convolution Theorem)\label{DVDCT}
For a representation $\kappa_\alpha$ of $G$ and $a_{\varphi_1},\,a_{\varphi_2}\in F(G)$
 $$\widehat{a_{\varphi_1}\ostar a_{\varphi_2}}\left(\alpha\right)=\widehat{a_{\varphi_1}}\left(\alpha\right)\circ\widehat{a_{\varphi_2}}\left(\alpha\right).$$
\end{theorem}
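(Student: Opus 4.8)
The plan is to recognise that this theorem is merely the ``density-side'' restatement of Proposition \ref{conv1}, with the bridge between the two provided by Van Daele's Convolution Theorem. Proposition \ref{conv1} already asserts that the assignment $\varphi \mapsto \widehat{a_\varphi}(\kappa)$ is multiplicative with respect to the convolution $\star$ in $F(\widehat{G})$, namely $\widehat{a_{\varphi_1 \star \varphi_2}}(\kappa) = \widehat{a_{\varphi_1}}(\kappa) \circ \widehat{a_{\varphi_2}}(\kappa)$. The present statement is instead phrased in terms of the convolution $\ostar$ of the densities in $F(G)$, so the only work is to pass between the two convolutions.

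First I would invoke Van Daele's Convolution Theorem (Theorem \ref{VDCT}), which states precisely that $a_{\varphi_1} \ostar a_{\varphi_2} = a_{\varphi_1 \star \varphi_2}$; that is, the $\ostar$-convolution of the densities is the density of the $\star$-convolution of the functionals. Applying the Fourier transform at the fixed irreducible representation $\kappa_\alpha$ to both sides of this identity gives
$$\widehat{a_{\varphi_1} \ostar a_{\varphi_2}}(\alpha) = \widehat{a_{\varphi_1 \star \varphi_2}}(\alpha).$$
I would then specialise the general representation $\kappa$ in Proposition \ref{conv1} to $\kappa = \kappa_\alpha$ (the notations $\widehat{a_\varphi}(\kappa)$ and $\widehat{a_\varphi}(\alpha)$ coincide by the definition of the Fourier transform), obtaining $\widehat{a_{\varphi_1 \star \varphi_2}}(\alpha) = \widehat{a_{\varphi_1}}(\alpha) \circ \widehat{a_{\varphi_2}}(\alpha)$. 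Chaining the two displays yields the claim.

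Since both ingredients are already established earlier in the excerpt, there is no genuine computational obstacle here. The only point requiring care is bookkeeping: one must check that the directions of the various convolutions and the left/right conventions built into the definitions of $\ostar$, $\star$, and the Fourier transform are mutually consistent, so that Van Daele's identity and Proposition \ref{conv1} really do compose in the order claimed. Once the conventions are seen to align, the theorem follows at once; in effect it records that the Fourier transform $a_\varphi \mapsto \widehat{a_\varphi}(\alpha)$ intertwines the convolution $\ostar$ on $F(G)$ with operator composition on $L(\overline{V_\alpha})$.
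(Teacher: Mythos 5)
Your proposal is correct and follows essentially the same route as the paper: the paper's proof likewise applies Van Daele's Convolution Theorem \ref{VDCT} to replace $a_{\varphi_1}\ostar a_{\varphi_2}$ by $a_{\varphi_1\star\varphi_2}$ inside the Fourier transform at $\kappa_\alpha$, and then invokes Proposition \ref{conv1} to factor $\widehat{a_{\varphi_1\star\varphi_2}}(\alpha)$ as the composition $\widehat{a_{\varphi_1}}(\alpha)\circ\widehat{a_{\varphi_2}}(\alpha)$. No gaps; your remark about checking the left/right conventions is the only point of care and it is handled identically in the paper.
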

 \begin{proof} Recalling the identifications $\widehat{a_\varphi}(\alpha)=:\widehat{\mathcal{F}(a_\varphi)}(\alpha)=\widehat{\varphi}(\alpha)$, and that
 \begin{align*}
 \widehat{\left(a_{\varphi_1}\ostar  a_{\varphi_2}\right)}\left(\alpha\right)&=\left(I_{\overline{V_\alpha}}\otimes\mathcal{F}\left(a_{\varphi_1}\ostar  a_{\varphi_2}\right)\right)\overline{\kappa_\alpha}=\left(I_{\overline{V_\alpha}}\otimes\mathcal{F}\left(a_{\varphi_1\star\varphi_2}\right)\right)\overline{\kappa_\alpha}
 \\&= \widehat{a_{\varphi_1\star \varphi_2}}(\alpha)=\widehat{a_{\varphi_1}}\left(\alpha\right)\circ\widehat{a_{\varphi_2}}\left(\alpha\right).
 \end{align*} The second equality uses Van Daele's Convolution Theorem \ref{VDCT} and final equality is Proposition \ref{conv1} $\bullet$
 \end{proof}

\bigskip

\begin{lemma}\label{lemmax}
Where the sum is over unitary irreducible representations,
$$\widehat{h}(\varphi_1\star \varphi_2)=\sum_{\alpha\in\text{Irr}(G)}{d_\alpha}\operatorname{ Tr}\left(\widehat{a_{\varphi_1}}\left(\alpha\right)\widehat{a_{\varphi_2}}\left(\alpha\right)\right)\,.\,\,\qquad (\varphi_1,\,\varphi_2\in F(\widehat{G}))$$
\end{lemma}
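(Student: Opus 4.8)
The plan is to obtain this identity as a direct corollary of three results already established in the excerpt: the Diaconis--Van Daele Inversion Theorem \ref{DVDIT}, Van Daele's Convolution Theorem \ref{VDCT}, and the Diaconis--Van Daele Convolution Theorem \ref{DVDCT}. All of the genuine work has been done in proving those; the present lemma is the assembly of the pieces.

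First I would apply the Inversion Theorem \ref{DVDIT} to the single functional $\varphi=\varphi_1\star\varphi_2\in F(\widehat{G})$. This immediately yields
$$\widehat{h}(\varphi_1\star\varphi_2)=\sum_{\alpha\in\operatorname{Irr}(G)}d_\alpha\operatorname{Tr}\left(\widehat{a_{\varphi_1\star\varphi_2}}(\alpha)\right),$$
so the entire task reduces to identifying the Fourier transform $\widehat{a_{\varphi_1\star\varphi_2}}(\alpha)$ at each irreducible representation $\alpha$.

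Next I would rewrite the density of the convolution. By Van Daele's Convolution Theorem \ref{VDCT}, the density of $\varphi_1\star\varphi_2$ is the $\ostar$-convolution of the individual densities, $a_{\varphi_1\star\varphi_2}=a_{\varphi_1}\ostar a_{\varphi_2}$. Substituting this and then invoking the Diaconis--Van Daele Convolution Theorem \ref{DVDCT}, which states that the Fourier transform turns the $\ostar$-convolution into composition of operators, gives
$$\widehat{a_{\varphi_1\star\varphi_2}}(\alpha)=\widehat{a_{\varphi_1}\ostar a_{\varphi_2}}(\alpha)=\widehat{a_{\varphi_1}}(\alpha)\circ\widehat{a_{\varphi_2}}(\alpha).$$
Inserting this into the sum from the first step produces exactly the claimed formula.

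There is no genuine obstacle here: the lemma is a one-line consequence once the supporting theorems are in place. The only point demanding care is bookkeeping --- ensuring that the object fed to the Inversion Theorem is the convolution functional $\varphi_1\star\varphi_2=(\varphi_1\otimes\varphi_2)\Delta$ living in $F(\widehat{G})$, while the Fourier transform is computed via the $\ostar$-convolution of the corresponding densities in $F(G)$. The two Convolution Theorems are precisely what guarantee that these two notions of convolution match up, so that the composition $\widehat{a_{\varphi_1}}(\alpha)\circ\widehat{a_{\varphi_2}}(\alpha)$ is indeed the correct operator-level translation of $\widehat{a_{\varphi_1\star\varphi_2}}(\alpha)$.
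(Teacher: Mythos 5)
Your proposal is correct and uses exactly the same three ingredients as the paper's proof (the Inversion Theorem \ref{DVDIT}, Van Daele's Convolution Theorem \ref{VDCT}, and the Diaconis--Van Daele Convolution Theorem \ref{DVDCT}), merely applying them in a trivially permuted order: the paper first rewrites $\widehat{h}(\varphi_1\star\varphi_2)$ as $\varepsilon(a_{\varphi_1}\ostar a_{\varphi_2})$ and then inverts, whereas you invert first and then identify the density. This is essentially the same argument.
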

\begin{proof}
The proof uses the convolution theorem of Van Daele and the definition of $\widehat{h}$ to find
$$\widehat{h}\left(\mathcal{F}\left(a_{\varphi_1}\right)\star\mathcal{F}\left(a_{\varphi_2}\right)\right)=\widehat{h}\left(\mathcal{F}\left(a_{\varphi_1}\ostar a_{\varphi_2}\right)\right)=\varepsilon\left(a_{\varphi_1}\ostar  a_{\varphi_2}\right).$$
Now use the Diaconis--Van Daele Inversion Theorem \ref{DVDIT} and the Diaconis--Van Daele Convolution Theorem \ref{conv1}
\begin{align*}
\varepsilon\left(a_{\varphi_1}\ostar  a_{\varphi_2}\right)&=\sum_{\alpha\in\text{Irr}(G)}d_\alpha\text{ Tr}\left(\widehat{a_{\varphi_1}\ostar  a_{\varphi_2}}\left(\alpha\right)\right)=\sum_{\alpha\in\text{Irr}(G)}d_\alpha\text{ Tr}\left(\widehat{a_{\varphi_1}}\left({\alpha}\right)\widehat{a_{\varphi_2}}\left({\alpha}\right)\right)\qquad\bullet
\end{align*}
\end{proof}

\begin{proposition}
Suppose that $\nu\in M_p(G)$. If $\kappa_\tau$ is the trivial representation, $\lambda\mapsto \lambda\otimes \mathds{1}_{G}$,  then $\widehat{a_\nu}\left(\tau\right)=I_{\overline{\mathbb{C}}}$.
\end{proposition}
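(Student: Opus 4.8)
The plan is to unwind the definition of the Fourier transform at the trivial representation and then use only that $\nu$ is a state. First I would identify the conjugate $\overline{\kappa_\tau}$. The trivial representation $\kappa_\tau$ acts on the one-dimensional space $V_\tau=\mathbb{C}$ and has the single matrix element $\rho_{11}^\tau=\mathds{1}_G$, since $\kappa_\tau(\lambda)=\lambda\otimes\mathds{1}_G=\lambda\otimes\rho_{11}^\tau$. Because $\mathds{1}_G$ is self-adjoint, the conjugate representation $\overline{e_j}\mapsto\sum_i\overline{e_i}\otimes(\rho_{ij})^*$ reduces to $\overline{\kappa_\tau}(\overline{\lambda})=\overline{\lambda}\otimes\mathds{1}_G^*=\overline{\lambda}\otimes\mathds{1}_G$; that is, $\overline{\kappa_\tau}$ is again trivial on $\overline{V_\tau}=\overline{\mathbb{C}}$.

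Next I would substitute into the definition $\widehat{a_\nu}(\tau)=(I_{\overline{\mathbb{C}}}\otimes\nu)\circ\overline{\kappa_\tau}$. For $\overline{\lambda}\in\overline{\mathbb{C}}$,
$$\widehat{a_\nu}(\tau)(\overline{\lambda})=(I_{\overline{\mathbb{C}}}\otimes\nu)\left(\overline{\kappa_\tau}(\overline{\lambda})\right)=(I_{\overline{\mathbb{C}}}\otimes\nu)\left(\overline{\lambda}\otimes\mathds{1}_G\right)=\nu(\mathds{1}_G)\,\overline{\lambda}.$$
Finally, since $\nu\in M_p(G)$ is a state, it is positive of norm one, so $\nu(\mathds{1}_G)=1$. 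Hence $\widehat{a_\nu}(\tau)(\overline{\lambda})=\overline{\lambda}$ for every $\overline{\lambda}$, which is precisely $\widehat{a_\nu}(\tau)=I_{\overline{\mathbb{C}}}$.

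As for difficulty, there is no substantive obstacle: the computation is a single evaluation once the definitions are in place. The only point requiring care is correctly passing to the conjugate of the trivial representation and observing that its lone matrix element $\mathds{1}_G$ is fixed by the involution, so that triviality is preserved under conjugation; everything else follows from the defining property of a state.
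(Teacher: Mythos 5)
Your proof is correct and follows essentially the same route as the paper's: unwind the definition of $\widehat{a_\nu}(\tau)$ at the conjugate of the trivial representation and use $\nu(\mathds{1}_G^*)=\nu(\mathds{1}_G)=1$ because $\nu$ is a state. Your version is merely a little more explicit about why the conjugate of the trivial representation is again trivial.
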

\begin{proof} A calculation:
\begin{align*}
\widehat{a_\nu}\left(\tau\right)\lambda=\left(I_{\overline{\mathbb{C}}}\otimes\nu\right)\overline{\kappa_{\tau}}\left(\lambda\right)=\left(I_{\overline{\mathbb{C}}}\otimes \nu\right)(\lambda\otimes \mathds{1}_{G}^*)=\lambda\otimes1\equiv\lambda\qquad\bullet
\end{align*}
\end{proof}

\begin{proposition}
Suppose that $\kappa_{\alpha}$ is a non-trivial and irreducible unitary representation, then $\widehat{\mathds{1}_{G}}\left(\alpha\right)=0$.\label{vanish}
\end{proposition}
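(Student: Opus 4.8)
The plan is to unfold the definition of the Fourier transform of $\mathds{1}_G$ and reduce the claim to the Peter--Weyl orthogonality relations. First I would observe that $\mathds{1}_G$ is nothing but the density of the Haar state: since $\int_G b = \int_G \mathds{1}_G\, b$ for every $b\in F(G)$, the functional $\mathcal{F}(\mathds{1}_G)$ is precisely $\int_G$. Consequently $\widehat{\mathds{1}_G}(\alpha) = (I_{\overline{V_\alpha}}\otimes \int_G)\circ\overline{\kappa_\alpha}$, and applying this operator to a basis vector $\overline{e_j}\in\overline{V_\alpha}$ and using $\overline{\kappa_\alpha}(\overline{e_j}) = \sum_i \overline{e_i}\otimes(\rho_{ij}^\alpha)^*$ yields $\widehat{\mathds{1}_G}(\alpha)\overline{e_j} = \sum_i \overline{e_i}\cdot\int_G (\rho_{ij}^\alpha)^*$. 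Thus the operator is determined entirely by the scalars $\int_G (\rho_{ij}^\alpha)^*$.

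It therefore suffices to show that $\int_G(\rho_{ij}^\alpha)^* = 0$ for all $i,j$ whenever $\alpha$ is non-trivial. I would rewrite each such integral as an inner product: since $\langle a,b\rangle = \int_G a^* b$ and $\mathds{1}_G^* = \mathds{1}_G$, we have $\int_G (\rho_{ij}^\alpha)^* = \int_G (\rho_{ij}^\alpha)^* \mathds{1}_G = \langle \rho_{ij}^\alpha, \mathds{1}_G\rangle$. The key observation is that $\mathds{1}_G$ is itself a matrix element, namely the single entry $\rho_{11}^\tau$ of the trivial representation $\kappa_\tau$ (as $\kappa_\tau(\lambda)=\lambda\otimes\mathds{1}_G$ exhibits $\mathds{1}_G$ as its $1\times 1$ matrix).

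The conclusion then drops out of the Finite Quantum Peter--Weyl Theorem \ref{QPW}: the matrix elements of pairwise-inequivalent irreducible representations are mutually orthogonal, and since $\alpha$ is non-trivial it is inequivalent to $\tau$, so $\langle \rho_{ij}^\alpha, \rho_{11}^\tau\rangle = 0$ for every $i,j$. Hence all coefficients $\int_G (\rho_{ij}^\alpha)^*$ vanish and $\widehat{\mathds{1}_G}(\alpha) = 0$. The argument is essentially a one-line consequence of orthogonality; the only point requiring genuine care is the double identification of $\mathds{1}_G$ --- both as the Haar density $a_{\int_G}$ and as the trivial matrix element $\rho_{11}^\tau$ --- after which there is no real obstacle to overcome.
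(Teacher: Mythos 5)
Your proposal is correct and follows essentially the same route as the paper: unfold $\widehat{\mathds{1}_G}(\alpha)$ on a basis vector to obtain the coefficients $\int_G(\rho_{ij}^\alpha)^*\mathds{1}_G$, recognise $\mathds{1}_G$ as the matrix element of the trivial representation, and conclude by the Peter--Weyl orthogonality relations. The extra identification of $\mathcal{F}(\mathds{1}_G)$ with the Haar state is a harmless (and correct) elaboration that the paper leaves implicit.
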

\begin{proof}
Another calculation:
\begin{align*}
\widehat{\mathds{1}_{G}}\left(\alpha\right)\overline{e_j}&=\left(I\otimes\mathcal{F}\left(\mathds{1}_{G}\right)\right)\sum_i\overline{e_i}\otimes\left(\rho_{ij}^\alpha\right)^*=\sum_i\overline{e_i}\,\mathcal{F}\left(\mathds{1}_{G}\right)\left(\rho_{ij}^\alpha\right)^*
=\sum_{i}\int_{G}\left(\left(\rho_{ij}^\alpha\right)^*\mathds{1}_{G}\right)\cdot \overline{e_i}.
\end{align*}
Note that $\mathds{1}_{G}$ is the matrix element of the trivial representation and $\alpha$ is not equivalent to the trivial representation. Therefore, by the orthogonality relation $\widehat{\mathds{1}_{G}}\left(\alpha\right)=0$ $\bullet$
\end{proof}

\bigskip

\subsection{The Upper Bound Lemma}
At this point, to mirror the classical notation, use $\pi$ for $\int_G$. Recall, from Section \ref{dist}, that
$$\|\nu-\pi\|^2\leq \frac14\, \widehat{h}\left((\nu-\pi)^*\star (\nu-\pi)\right)\,.\,\,\qquad (\nu,\,\mu\in F(\widehat{G}))$$

\bigskip

\begin{lemma}
(Diaconis--Shahshahani Upper Bound Lemma)
Let $\nu\in M_p(G)$. Then
\begin{align*}
\|\nu^{\star k}-\pi\|^2\leq \frac14\sum_{\alpha\in\text{Irr}(G)\backslash\{\tau\}}d_\alpha\operatorname{Tr} \left[\left(\widehat{\nu}\left(\alpha\right)^*\right)^k\left(\widehat{\nu}\left(\alpha\right)\right)^k\right],
\end{align*}
where the sum is over all non-trivial irreducible unitary representations.
\end{lemma}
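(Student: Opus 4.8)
The plan is to feed $\nu^{\star k}$ into the Cauchy--Schwarz bound established at the end of Section \ref{dist}, namely
$$\|\nu^{\star k}-\pi\|^2\leq \frac14\,\widehat{h}\left((\nu^{\star k}-\pi)^*\star(\nu^{\star k}-\pi)\right),$$
and then to evaluate the right-hand side using the Plancherel-type expansion of Lemma \ref{lemmax}. Writing $\psi:=\nu^{\star k}-\pi$, Lemma \ref{lemmax} applied to $\varphi_1=\psi^*$ and $\varphi_2=\psi$ gives
$$\widehat{h}(\psi^*\star\psi)=\sum_{\alpha\in\operatorname{Irr}(G)}d_\alpha\operatorname{Tr}\left(\widehat{a_{\psi^*}}(\alpha)\,\widehat{a_\psi}(\alpha)\right),$$
and since the irreducibles furnished by the finite Quantum Peter--Weyl Theorem \ref{QPW} are unitary, the $*$-homomorphism property (\ref{star}) of Proposition \ref{conv1} converts $\widehat{a_{\psi^*}}(\alpha)$ into $\widehat{a_\psi}(\alpha)^*$.

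Next I would compute $\widehat{\psi}(\alpha)=\widehat{a_\psi}(\alpha)$ term by term. Linearity of the Fourier transform gives $\widehat{\psi}(\alpha)=\widehat{\nu^{\star k}}(\alpha)-\widehat{\pi}(\alpha)$. The Diaconis--Van Daele Convolution Theorem \ref{DVDCT}, combined with the density identity $a_{\varphi_1\star\varphi_2}=a_{\varphi_1}\ostar a_{\varphi_2}$ of Theorem \ref{VDCT} and a short induction, yields $\widehat{\nu^{\star k}}(\alpha)=\widehat{\nu}(\alpha)^k$. For the Haar-state term, the density of $\pi=\int_G$ is $a_\pi=\mathds{1}_G$, so that $\widehat{\pi}(\alpha)=\widehat{\mathds{1}_G}(\alpha)$; this is precisely the object governed by Proposition \ref{vanish}.

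The decisive step is the behaviour at the trivial representation $\tau$, and it is the only genuinely conceptual point --- everything else is bookkeeping. On the one hand, Proposition \ref{vanish} gives $\widehat{\pi}(\alpha)=\widehat{\mathds{1}_G}(\alpha)=0$ for every non-trivial irreducible $\alpha$, so that $\widehat{\psi}(\alpha)=\widehat{\nu}(\alpha)^k$ on those representations. On the other hand, at $\tau$ itself both pieces collapse to the identity: $\nu$ is a state, whence $\widehat{\nu}(\tau)=I_{\overline{\mathbb{C}}}$ and $\widehat{\nu}(\tau)^k=I_{\overline{\mathbb{C}}}$, while $\pi$ is also a state, whence $\widehat{\pi}(\tau)=I_{\overline{\mathbb{C}}}$; thus $\widehat{\psi}(\tau)=0$ and the trivial representation drops out of the sum. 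This cancellation is the quantum shadow of the classical centring condition, and it is what justifies excising $\tau$ from the index set. Substituting $\widehat{a_\psi}(\alpha)=\widehat{\nu}(\alpha)^k$ and using $(A^k)^*=(A^*)^k$ gives
$$\widehat{h}(\psi^*\star\psi)=\sum_{\alpha\in\operatorname{Irr}(G)\backslash\{\tau\}}d_\alpha\operatorname{Tr}\left((\widehat{\nu}(\alpha)^*)^k\,\widehat{\nu}(\alpha)^k\right),$$
and dividing by four delivers the stated bound.
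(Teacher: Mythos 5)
Your proposal is correct and follows essentially the same route as the paper: the Cauchy--Schwarz bound from Section \ref{dist}, the Plancherel-type expansion of Lemma \ref{lemmax} together with the $*$-homomorphism property (\ref{star}), the vanishing of $\widehat{\mathds{1}_G}(\alpha)$ on non-trivial irreducibles, and the Diaconis--Van Daele Convolution Theorem to obtain $\widehat{\nu^{\star k}}(\alpha)=\widehat{\nu}(\alpha)^k$. The only (immaterial) difference is that you work with $\nu^{\star k}-\pi$ throughout, whereas the paper carries out the expansion for $\nu-\pi$ and invokes the convolution theorem $k$ times at the end.
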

\begin{proof}
Noting that $a_\pi=\mathds{1}_G$ write
\begin{align*}
\widehat{h}\left((\nu-\pi)^*\star (\nu-\pi)\right)&=\widehat{h}\left(\mathcal{F}\left(a_\nu-\mathds{1}_{G}\right)^*\star\mathcal{F}\left(a_\nu-\mathds{1}_{G}\right)\right).
\end{align*}
Now using Lemma \ref{lemmax} and (\ref{star}), this is equal to
\begin{align*}
\widehat{h}\left((\nu-\pi)^*\star (\nu-\pi)\right)&=\sum_{\alpha\in\operatorname{Irr(G)}}d_{\alpha}\operatorname{Tr} \left[\widehat{\left(a_\nu-\mathds{1}_{G}\right)}\left(\alpha\right)^*\widehat{\left(a_\nu-\mathds{1}_{G}\right)}\left(\alpha\right)\right].
\end{align*}
Now note that
\begin{align*}\widehat{\left(a_\nu-\mathds{1}_{G}\right)}\left(\alpha\right)&=\widehat{a_\nu}\left(\alpha\right)-\widehat{\mathds{1}_{G}}\left(\alpha\right).
\end{align*}
If $\alpha=\tau$, the trivial representation, then this yields zero as both terms are the identity on $\overline{\mathbb{C}}$. If $\alpha$ is non-trivial, then $\widehat{\mathds{1}_{G}}\left(\alpha\right)=0$ and thus (using the notation $\widehat{a_\nu}(\alpha)=\widehat{\nu}(\alpha)$):
$$\|\nu-\pi\|^2\leq \frac14\,\widehat{h}\left((\nu-\pi)^*\star (\nu-\pi)\right) =\frac14\sum_{\alpha\in\text{Irr}(G)\backslash\{\tau\}}d_\alpha\operatorname{Tr} \left[\widehat{\nu}\left(\alpha\right)^*\widehat{\nu}\left(\alpha\right)\right].$$
Apply the Diaconis--Van Daele Convolution Theorem \ref{DVDCT} $k$ times $\bullet$
\end{proof}

\bigskip

Note  this is \emph{exactly} the same as the classical Diaconis--Shahshahani Upper Bound Lemma.

\bigskip

\subsection{Lower Bounds}

\begin{lemma}\label{lbl}
(Lower Bound Lemma)
Suppose that $\nu\in M_p(G)$ and $\rho$ the matrix element of a non-trivial one dimensional representation. Then
\begin{align*}
\|\nu^{\star k}-\pi \|\geq \frac12 |\nu(\rho)|^k.
\end{align*}

\end{lemma}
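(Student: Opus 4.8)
The plan is to extract the lower bound directly from the supremum presentation of the total variation distance recorded in (\ref{lb}), namely
$$\|\nu^{\star k}-\pi\|=\frac12\,\sup_{\|\phi\|_{\infty}\leq 1}\left|\nu^{\star k}(\phi)-\pi(\phi)\right|,$$
by feeding in a single well-chosen test element. The obvious candidate is $\phi=\rho$ itself, and for this to be admissible I first need $\|\rho\|_\infty\leq 1$. So the preliminary step is to read off the structural consequences of $\rho$ being the matrix element of a one-dimensional unitary representation: unfolding the corepresentation axioms for the one-dimensional corepresentation $\lambda\mapsto\lambda\otimes\rho$ shows that $\rho$ is group-like, $\Delta(\rho)=\rho\otimes\rho$ and $\varepsilon(\rho)=1$, while unitarity of the corepresentation forces $\rho^*\rho=\mathds{1}_G$. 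In finite dimensions an isometry is a unitary, so $\|\rho\|_\infty=1$ and $\rho$ is a legitimate test element; note that no self-adjointness is required, since the supremum in (\ref{lb}) already ranges over all (possibly complex-valued) $\phi$ of infinity-norm at most one.

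The second step is to evaluate the two terms $\pi(\rho)$ and $\nu^{\star k}(\rho)$. For the Haar state I would write $\pi(\rho)=\int_G\rho=\int_G\mathds{1}_G^*\rho=\langle\mathds{1}_G,\rho\rangle$, and then invoke the orthogonality of matrix elements in the Finite Quantum Peter--Weyl Theorem \ref{QPW}: as $\mathds{1}_G$ is the matrix element of the trivial representation $\tau$ and $\rho$ is the matrix element of a non-trivial, hence inequivalent, irreducible, this inner product vanishes, giving $\pi(\rho)=0$. For the convolution powers I would argue by induction on $k$ using the defining recursion $\nu^{\star(k+1)}=(\nu\otimes\nu^{\star k})\circ\Delta$; the group-like identity collapses the inductive step to
$$\nu^{\star(k+1)}(\rho)=(\nu\otimes\nu^{\star k})(\rho\otimes\rho)=\nu(\rho)\,\nu^{\star k}(\rho),$$
so that $\nu^{\star k}(\rho)=\nu(\rho)^k$. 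Substituting both evaluations into the displayed presentation yields $\|\nu^{\star k}-\pi\|\geq\frac12|\nu(\rho)^k-0|=\frac12|\nu(\rho)|^k$, which is the claim.

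I do not anticipate a genuine obstacle: the argument is really just the selection of the right test element together with the two one-line structural facts about a one-dimensional representation, so the only care needed is in justifying that $\rho$ is a unitary (hence norm-one) group-like element from the corepresentation and unitarity axioms. As a consistency check I would remark that $|\nu(\rho)|=|\widehat{\nu}(\alpha)|$ for the one-dimensional representation $\alpha$ carrying $\rho$, so this lower bound matches exactly the single summand the Upper Bound Lemma contributes at $\alpha$, the square root reflecting the passage from the squared two-norm estimate to the total variation distance.
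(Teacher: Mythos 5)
Your proof is correct and follows the same skeleton as the paper's: plug the norm-one test element $\rho$ into the supremum presentation (\ref{lb}), kill the $\pi(\rho)$ term by Peter--Weyl orthogonality (this is exactly the argument of Proposition \ref{vanish}, which the paper cites), and reduce to the multiplicativity of $\nu^{\star k}$ on $\rho$. The one place you genuinely diverge is in how that multiplicativity is obtained. The paper routes it through the Fourier transform: for a one-dimensional representation $\widehat{\nu}(\rho)=\overline{\nu(\rho)}$, and the Diaconis--Van Daele Convolution Theorem \ref{DVDCT} gives $\widehat{\nu^{\star k}}(\rho)=\widehat{\nu}(\rho)^k$, so $|\nu^{\star k}(\rho)|=|\nu(\rho)|^k$ after undoing the conjugation. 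You instead observe that the corepresentation axioms force $\Delta(\rho)=\rho\otimes\rho$ and run induction on the recursion $\nu^{\star(k+1)}=(\nu\otimes\nu^{\star k})\circ\Delta$, getting the stronger identity $\nu^{\star k}(\rho)=\nu(\rho)^k$ without absolute values. Your version is more elementary (no conjugate representation, no Fourier transform) and makes the group-like structure of one-dimensional matrix elements explicit; the paper's version is less self-contained but fits the lemma into the same Fourier-analytic framework used for the Upper Bound Lemma, which is aesthetically consistent with the rest of Section 3. Your justification that $\rho$ is an admissible test element (isometry plus finite dimensionality gives unitary, hence $\|\rho\|_\infty=1$) matches the paper's appeal to Timmermann and is sound.
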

\begin{proof}
Starting with (\ref{lb}), note that the argument from Proposition \ref{vanish} shows that $\rho$ has zero expectation under the random distribution. Note also that $\rho$ is unitary (Proposition 3.1.7 v., \cite{Timm}), thus norm one and thus  a suitable test function.

\bigskip

Note that for a one dimensional representation, by the Convolution Theorem \ref{DVDCT}:
$$|\nu^{\star k}(\rho)|=\left|\overline{\nu^{\star k}(\rho)}\right|=\left|\widehat{\nu^{\star k}}(\rho)\right|=|\widehat{\nu}(\rho)^k|=\left|\overline{\nu(\rho)}\right|^k=|\nu(\rho)|^{k}\,\,\,\bullet$$
\end{proof}
\section{Examples}
\subsection{Random Walks on Dual Groups}
Consider the dual quantum group, $\widehat{G}$, of a not-necessarily abelian group, $G$, given by $F(\widehat{G}):=\mathbb{C}G$. States on $F(\widehat{G})$ are given by positive definite functions $u\in F(G)=\mathbb{C}\widehat{G}$ (see Bekka, de la Harpe and Valette (Proposition C.4.2., \cite{Bekka})). Furthermore, there is a correspondence between positive definite functions and unitary representations on $G$ together with a vector. In particular, for each positive definite function $u$ there exists a unitary representation $\rho:G\rightarrow \operatorname{GL}(V)$ and a  vector $\xi\in V$ such that
\begin{equation} u(s)=\langle\rho(s)\xi,\xi\rangle,\label{posdef}\end{equation}
and for each unitary representation and  vector (\ref{posdef}) defines a positive definite function on $G$. For $u$ to be a state it is required that $u(e)=1$ and so $\langle \xi,\xi\rangle=1$; i.e. $\xi$ is a unit vector. Therefore probabilities on $\widehat{G}$ can be chosen by selecting a given representation and unit vector.

\bigskip

Since $\Delta(\delta^s)=\delta^s\otimes\delta^s$, it follows that $\kappa_s(\lambda)=\lambda\otimes \delta^s$ defines a (co)representation (with $\tau=\kappa_e$), and thus all irreducible representations are of this form by counting. This makes the application of the upper bound lemma straightforward. Let $u\in M_p(\widehat{G})$ so that $\widehat{u}(\kappa_s)=\overline{u(s)}$
and so
$$\widehat{u}(\kappa_s)^*\widehat{u}(\kappa_s)=|u(s)|^2.$$
Therefore the upper bound lemma yields:
$$\|u^{\star k}-\pi\|^2\leq \frac{1}{4}\sum_{t\in G\backslash \{e\}}|u(t)|^{2k}.$$
\subsubsection{A Walk on $\widehat{S_n}$}
Consider  the quantum group $\widehat{S_n}$ (given by $F(\widehat{S_n}):=\mathbb{C} S_n$) with a state $u\in M_p(\widehat{S_n})$ given by the permutation representation on $\mathbb{C}^n$ given by $\pi(\sigma)(e_i)=e_{\sigma(i)}$ together with the unit vector $\xi$ with components
$$\alpha_i=\sqrt{n^{n-i}\frac{n-1}{n^n-1}}.$$
This vector is such that the $\alpha_i$ are in geometric progression with common ratio $\displaystyle\frac{1}{\sqrt{n}}$. For large $n$, this vector is approximately given by:
$$\xi\approx \left(1,\frac{1}{\sqrt{n}},\frac{1}{\sqrt{n^2}},\cdots \right)\approx (1,0,0,\cdots).$$
Following this through
$$u(\sigma)\approx\begin{cases}
                    1 & \mbox{if } \sigma(1)=1, \\
                    0 & \mbox{otherwise}.
                  \end{cases}$$
                  \newpage
To establish the upper bound some elementary inequalities will be used.
\begin{lemma}
The following hold for $n\geq 5$:
\begin{align}
  \left(\frac{4}{n}\right)^k(n-1) & \leq 1, \text{ for }k>n^{n-1}\ln(n)/2,\,  \label{ineq1}\\
  \sqrt{\ln(n-1)}-n+2 & \leq 0, \label{ineq2}\\
  A_n=\frac{(n-1)(\sqrt{n}-1)^2}{n^n-1}n^{n-2}&\rightarrow 1^-\text{ monotonically}, \label{ineq3} \\
  g(n)=\frac{(\sqrt{n}-1)^2}{n^n-1}n^{n-1} & \rightarrow 1^-\text{ monotonically}. \label{ineq4}
\end{align}
\end{lemma}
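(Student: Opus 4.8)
The plan is to treat the four inequalities separately, since they fall into two types: (\ref{ineq1}) and (\ref{ineq2}) reduce in one line to elementary facts, while (\ref{ineq3}) and (\ref{ineq4}) require a limit computation together with a monotonicity argument.

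For (\ref{ineq1}) I would take logarithms. Since $n\geq 5$ gives $\ln(n/4)>0$, the inequality $(4/n)^k(n-1)\leq 1$ is equivalent to $k\geq \ln(n-1)/\ln(n/4)$, so it suffices to check that the stated threshold dominates this minimal requirement. Bounding $\ln(n/4)\geq \ln(5/4)$ and $\ln(n-1)\leq \ln n$ gives $\ln(n-1)/\ln(n/4)\leq \ln(n)/\ln(5/4)$, whereas the threshold $n^{n-1}\ln(n)/2$ carries the factor $n^{n-1}/2\geq 312$; as $1/\ln(5/4)\approx 4.48$, the threshold exceeds the requirement with enormous room, and the claim is immediate. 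For (\ref{ineq2}) I would apply the standard bound $\ln t\leq t-1$ at $t=n-1$ to get $\ln(n-1)\leq n-2$, whence $\sqrt{\ln(n-1)}\leq\sqrt{n-2}\leq n-2$ because $n-2\geq 1$ for $n\geq 5$; this is exactly the assertion.

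The substance is in (\ref{ineq3}) and (\ref{ineq4}), which share a structure. To pin down the limit \emph{and} the fact that the approach is from below, I would clear denominators to obtain the exact identities
$$1-A_n=\frac{(2n^{3/2}-2\sqrt n+1)\,n^{n-2}-1}{n^n-1}\qquad\text{and}\qquad 1-g(n)=\frac{2n^{\,n-1/2}-n^{\,n-1}-1}{n^n-1}.$$
Both numerators are positive (the leading $2n^{\,n-1/2}$ dominates, since $2\sqrt n>1$), so $A_n,g(n)<1$; and as each numerator is of order $n^{\,n-1/2}$ against a denominator of order $n^n$, both differences are of order $n^{-1/2}\to 0$, giving the limit $1$.

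For the monotonic increase I would pass to a real variable $x$ and use the logarithmic derivative. Writing $\ln A(x)=\ln(x-1)+2\ln(\sqrt x-1)+(x-2)\ln x-\ln(x^x-1)$ and differentiating, the $(\ln x+1)$ contributions coming from $(x-2)\ln x$ and from $\tfrac{d}{dx}\ln(x^x-1)=\tfrac{x^x(\ln x+1)}{x^x-1}$ cancel, and after combining the rational terms the expression collapses to
$$\frac{A'(x)}{A(x)}=\frac{1}{x(x-1)}+\frac{1}{x(\sqrt x-1)}-\frac{\ln x+1}{x^x-1},\qquad \frac{g'(x)}{g(x)}=\frac{1}{x(\sqrt x-1)}-\frac{\ln x+1}{x^x-1}.$$
In each case positivity of the derivative reduces to $x^x-1>x(\sqrt x-1)(\ln x+1)$, which holds with vast room for $x\geq 5$ since the left side is exponential in $x$ and the right side only polynomial-times-logarithmic. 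Thus $A$ and $g$ are increasing on $[5,\infty)$, and combined with the limit computation this yields $A_n\to 1^-$ and $g(n)\to 1^-$ monotonically. The main obstacle is precisely this monotonicity; the key simplification that makes it tractable is the cancellation of the dominant $(\ln x+1)$ terms in the logarithmic derivative, after which only a trivially-true exponential-versus-polynomial comparison survives.
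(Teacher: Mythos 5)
Your proof is correct, and on three of the four items it takes a genuinely different (and in places cleaner) route than the paper. For (\ref{ineq1}) the paper weakens the hypothesis to $k>n^2\ln(n)/2$, sets $h(n)=n(4/n)^{n^2\ln(n)/2}$, checks $h(5)<1$ and shows $h$ is decreasing by differentiating $\ln h$; your purely algebraic comparison of the threshold $n^{n-1}\ln(n)/2$ against the exact requirement $\ln(n-1)/\ln(n/4)$ avoids calculus entirely and makes the enormous slack visible. For (\ref{ineq2}) the paper simply declares the inequality trivial, so your $\ln t\leq t-1$ argument supplies the missing detail. For (\ref{ineq3}) and (\ref{ineq4}) both you and the paper differentiate in a real variable and show the derivative is positive, but the paper applies the quotient rule directly and is left verifying the positivity of a long polynomial-in-$n$ expression, whereas your logarithmic derivative exploits the cancellation of the $(\ln x+1)$ terms and reduces everything to the single comparison $x^x-1>x(\sqrt x-1)(\ln x+1)$; your cleared-denominator identities for $1-A_n$ and $1-g(n)$ also establish $A_n,g(n)<1$ directly, where the paper relies implicitly on ``increasing with limit $1$'' to get approach from below. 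The only cosmetic caveat is that the final exponential-versus-polynomial comparison should be backed by an explicit bound (e.g.\ $x(\sqrt x-1)(\ln x+1)\leq x^{5/2}<x^5-1\leq x^x-1$ for $x\geq 5$), but this is routine.
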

\begin{proof}
For (\ref{ineq1}), note that $k>n^2\ln(n)/2$ and thus $\displaystyle \left(\frac{4}{n}\right)^k(n-1)\leq n\left(\frac{4}{n}\right)^{n^2\ln(n)/2}:=h(n)$. Note that $h(5)<1$ and
      $$\frac{d}{dn}\left(\ln(h(n))\right)=n^2\ln(n)\left[\ln\left(\frac{4}{n}\right)\right]+n^2\left[\ln\left(\frac{2e^{1/n^2}}{n}\right)\right]<0,$$
while (\ref{ineq2}) is trivial. Note that $A_n\rightarrow 1$ and
  \begin{align*}
  \frac{(n^n-1)^2}{n^{n-3}(\sqrt{n}-1)}\frac{dA_n}{dn}&=n^n(n-2)+n^2\sqrt{n}(n^{n-2}-(\ln n+1))+n\ln n(n-1)
  \\ &\quad+n(n-2)+\sqrt{n}(n\ln n-1)+n\sqrt{n}+2>0
  \end{align*}
  for $n\geq 3$, and so the convergence is monotonic. Also $g(n)\rightarrow 1$ and
  \begin{align*}
    \frac{(n^n-1)^2}{n^{n-5/2}(\sqrt{n}-1)}g'(n) & =n^2(n^{n-2}\sqrt{n}-(\ln n+1))+\sqrt{n}(n\,\ln n+n-1)>0
  \end{align*}
  for $n\geq 3$ $\bullet$
\end{proof}
\subsubsection*{Upper Bounds}
\emph{For $k=\alpha\,n^{n-1}\ln(n)/2+c\,n^{n-2}$, $\alpha>1$, and $n$ large}
$$\|u^{\star k}-\pi\|^2\leq\frac12 e^{-2Ac}$$
\emph{where $A$ can be chosen to be any $A<1$ for $n$ sufficiently large.}
\begin{proof}
Note that
\begin{align*}
u(\sigma)=\langle\xi,\pi(\sigma)\xi\rangle=\frac{n^{n+1}-n^n}{n^n-1}\sum_{i=1}^n\frac{1}{\sqrt{n^{i+\sigma(i)}}}.
\end{align*}
Therefore, using the Upper Bound Lemma,
\begin{align*}
\|u^{\star k}-\pi\|^2&\leq\frac{1}{4}\left(\frac{n^{n+1}-n^n}{n^n-1}\right)^{2k}\sum_{\sigma\in S_n\backslash \{e\}}\left[\sum_{i=1}^n\frac{1}{\sqrt{n^{i+\sigma(i)}}}\right]^{2k}.
\end{align*}
Define for $a_i=1/\sqrt{n^i}$
$$S(\sigma)=\sum_{i=1}^na_ia_{\sigma(i)}.$$
An \emph{inversion} is an ordered pair $(j,k)$ with $j,k\in \{1,\dots,n\}$ with $j<k$ and $\sigma(j)>\sigma(k)$. All non-identity permutations have at least one inversion.

\bigskip

Taking the approach of Steele (\cite{perm}, P. 79), for any inversion $(j,k)$   define a new permutation by

$$\tau_1(i):=(j\quad k)\sigma(i)=\begin{cases}\sigma(i)&\text{ if }i\neq j,k,\\ \sigma(j)&\text{ if } i=k,\\ \sigma(k)&\text{ if }i=j.\end{cases}$$

Steele shows that
$$S(\sigma)\leq S(\tau_1).$$
That is, multiplying by $(j\quad k)$, whenever $(j,k)$ is an inversion,  in this fashion, increases the number of fixed points, reduces the number of inversions and increases $S$. This can always be done until $\tau_r=e$.  If the maximising $\sigma\in S_{n}/\{e\}$ were not a transposition, then it would be the product of at least two transpositions. Take one of the transpositions $(j\quad k)$: it is certainly an inversion. By the referenced calculation, $\tau_i:=(j\quad k)\sigma$ has $S(\tau_i)\geq S(\sigma)$.  Therefore, no matter what the starting permutation $\sigma$, $\displaystyle\tau_{r-1}$ is a transposition and therefore to maximise $S$ on $S_n\backslash\{e\}$ one just maximises over transpositions.

\bigskip

The one-step differences between the $a_i$ is decreasing in $i$ and so the smallest one-step difference is between $a_{n-1}$ and $a_n$. Let $(j\quad k)$ be a transposition with $j<k$. Note that
\begin{align*}
S((n-1\quad n))-S((j\quad k))&=a_j^2+a_k^2+2a_{n-1}a_n
\\&-a_{n-1}^2-a_n^2-2a_ja_k
\\&=(a_j-a_k)^2-(a_n-a_{n-1})^2\geq 0,
\end{align*}
so that $S$ is maximised at $(n-1\quad n)$.

\bigskip

Partition $S_n\backslash \{e\}$ into $F_1$ and $F_1^C$ where $F_1$ is the set of permutations with $\sigma(1)=1$. On $F_1$,
\begin{align*}
S(\sigma)\leq S((n-1\quad n))&=\frac{1}{n^n}\frac{n^n-n^2}{n-1}+\frac{2\sqrt{n}}{n^n}=:f_0.
\end{align*}
Now consider the maximum of $S$ on $F_1^C$. From Steele's argument \cite{perm}, it is known that strictly increasing the number of fixed points (by multiplying by suitably chosen transpositions), increases $S$.  Also, if written in the disjoint cycle notation, elements of $F_1^C$ must contain a cycle of the form $(1\quad i_2 \quad \dots \quad i_N)$. By multiplying by suitably chosen transpositions, any disjoint cycle not containing $1$ may be factored out whilst increasing $S$. Then write
$$(1\quad i_2 \quad \dots \quad i_N)=(1\quad i_N)(1\quad i_{N-1})\cdots (1\quad i_2),$$
so that the maximum of $S$ on $F_1^C$ occurs at an element of the form
$$\sigma=\prod_{k=N}^2(1\quad i_k).$$
All transpositions are inversions therefore can be removed --- all the time increasing $S$ --- until one gets a transposition of the form $(1\quad i)$. The maximum must occur at such a transposition. Note that
\begin{align*}
S((1\quad 2))-S((1\quad i))&=2a_1a_2+a_i^2-2a_1a_i-a_2^2
\\&=(a_1-a_i)^2-(a_1-a_2)^2\geq 0.
\end{align*}
Therefore $S(1\quad i)\leq S(1\quad 2)$ and for any $\sigma\in F_1^C$:
\begin{align*}
S(\sigma)&\leq S(1\quad 2)=\frac{2\sqrt{n}}{n^2}+\frac{1}{n^2}\frac{1}{n^n}\frac{n^n-n^2}{n-1}=:f_1.
\end{align*}
For $n\geq 4$, $f_1\leq 2f_0/\sqrt{n}$ as
\begin{align*}
\frac{2}{\sqrt{n}}f_0-f_1&=\frac{1}{n^2n^n(n-1)}[n^3\sqrt{n}((n^{n-3}-2)+n^{n-4}(n-\sqrt{n}))+n^2(4n-3)]\geq0.
\end{align*}

\bigskip

Therefore the Upper Bound Lemma yields:
\begin{align*}
\|u^{\star k}-\pi\|^2 &\leq \frac{1}{4}\left(\frac{n^{n+1}-n^n}{n^n-1}\right)^{2k}\left(\sum_{\sigma\in F_1}S(\sigma)^{2k}+\sum_{\sigma\in F_1^C}S(\sigma)^{2k}\right).
\end{align*}
It is shown in the author's PhD thesis (\cite{PhD}, p. 93) that this yields:
\begin{align}
\|u^{\star k}-\pi\|^2 &\leq \frac14 \left(1-\frac{(n-1)(\sqrt{n}-1)^2}{n^n-1}\right)^{2k}\sqrt{n-1}(n-1)^{n-1}e^{2-n}\left[1+\left(\frac{4}{n}\right)^k(n-1)\right].\label{SnUB}
\end{align}
Using (\ref{ineq1}), and  $(1-x)\leq \exp(-x)$, a rewriting yields:
\begin{align*}
  \|u^{\star k}-\pi\|^2 & \leq \frac12 \exp\left(-2k\,\frac{(n-1)(\sqrt{n}-1)^2}{n^n-1}+(n-1)\ln(n-1)\right)\times \\
   &\qquad \exp\left(\ln(\sqrt{n-1})-n+2\right)
\end{align*}
By (\ref{ineq2}) the second exponential is less than one. Writing $k=\alpha\,n^{n-1}\ln(n)/2+c\,n^{n-2}$ and rewriting again
\begin{align*}
\|u^{\star k}-\pi\|&\leq \frac12 \exp\left(-2c\overbrace{\frac{(n-1)(\sqrt{n}-1)^2}{n^n-1}n^{n-2}}^{=A_n}\right)\times
\\ &\qquad \exp\left((n-1)\ln(n-1)-\alpha (n-1)\ln(n) \underbrace{\frac{(\sqrt{n}-1)^2}{n^n-1}n^{n-1}}_{=g(n)}\right)
\end{align*}
Note that for $\alpha>1$ and $n$ sufficiently large by (\ref{ineq4})
\begin{align*}
  (n-1)\ln(n-1)\left(1-\alpha \,g(n)\right) & \leq 0 \\
  \Rightarrow (n-1)\ln(n-1) & \leq \alpha\,(n-1)\ln(n-1)\,g(n) \\
  & \leq \alpha\,(n-1)\ln(n)\,g(n) \\
  \Rightarrow (n-1)\ln(n-1)-\alpha\,(n-1)\ln(n)\,g(n) & \leq 0,
\end{align*}
and the result follows $\bullet$
\end{proof}

\bigskip

If there were an effective lower bound for the total variation distance for \newline $k=n^{n-1}\ln(n)/2-c\,n^{n-2}$, and if such a bound approached one for large $c$, then the cut-off phenomenon would be exhibited for this random walk. Unfortunately no such result is at hand, but the following gives a partial result, showing that the random walk is still far from random for small multiples of $n^{n-2}$. The following will be used.

\begin{lemma}
The following hold for, respectively, $0<x<1/2$ and $n\geq 2$:
\begin{align}
  1-x & \geq e^{-x^2-x}, \label{ineq5}\\
  B_n=\frac{n^n}{n^n-1}\left(\frac{n^2}{n^n-1}+1\right) & \rightarrow 1^+\text{ monotonically}. \label{ineq6}
\end{align}
\end{lemma}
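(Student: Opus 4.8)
The plan is to treat the two displayed claims as independent elementary facts and dispatch each in turn; neither requires any of the quantum-group machinery, only one-variable calculus, so the proof should parallel the earlier inequality lemmas.

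For (\ref{ineq5}), since $0<x<1/2$ guarantees $1-x>0$, I would take logarithms and reduce the claim to showing that $g(x):=\ln(1-x)+x+x^2\ge 0$ on $(0,1/2)$. A direct differentiation gives
$$g'(x)=\frac{-1}{1-x}+1+2x=\frac{x(1-2x)}{1-x},$$
which is strictly positive on $(0,1/2)$ because each of $x$, $1-2x$, and $1-x$ is positive there. As $g(0)=0$, it follows that $g(x)>0$ for $0<x<1/2$, and exponentiating recovers $1-x\ge e^{-x^2-x}$.

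For (\ref{ineq6}), the convergence $B_n\to 1$ is immediate: writing the two factors as $\frac{n^n}{n^n-1}=1+\frac{1}{n^n-1}$ and $\frac{n^2}{n^n-1}+1$, both tend to $1$ because $n^n-1\to\infty$ overwhelms $n^2$. Moreover each factor exceeds $1$ for every $n\ge 2$, so $B_n>1$, which gives the approach from above. The monotonicity I would establish not by differentiating $B_n$ directly --- which produces an unwieldy quotient --- but by exhibiting $B_n$ as a product of two \emph{positive, decreasing} sequences, since the product of positive decreasing sequences is again decreasing. The first factor $\frac{n^n}{n^n-1}$ is decreasing because $t\mapsto t/(t-1)$ is decreasing for $t>1$ and $n\mapsto n^n$ is increasing. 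For the second, I would rewrite $\frac{n^2}{n^n-1}=\bigl(n^{\,n-2}-n^{-2}\bigr)^{-1}$ and note that $n^{\,n-2}$ is increasing (its logarithm $(n-2)\ln n$ has derivative $\ln n+1-2/n>0$ for $n\ge 2$) while $n^{-2}$ is decreasing, so the bracketed quantity increases and its reciprocal decreases.

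The only place demanding any real care is the monotonicity in (\ref{ineq6}); the product-of-decreasing-sequences observation is what keeps it painless, sidestepping the messy quotient derivative used to handle the analogous claims (\ref{ineq3})--(\ref{ineq4}). Everything else is a short sign analysis.
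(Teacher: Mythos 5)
Your proof is correct, but it follows a genuinely different route from the paper on both counts. For (\ref{ineq5}) the paper does not argue at all: it simply cites Lemma 5.5.1 of the author's PhD thesis (specialised to $n=1$), whereas you supply a self-contained calculus argument --- passing to $g(x)=\ln(1-x)+x+x^2$, checking $g(0)=0$ and $g'(x)=x(1-2x)/(1-x)>0$ on $(0,1/2)$ --- which is sound and makes the lemma independent of the external reference. For (\ref{ineq6}) the paper differentiates $B_n$ in the real variable $n$ and verifies the sign of the resulting expression
$n^n[2n-(1+\ln n)(n^2+1)]-(1+\ln n)n^2-(2n-(1+\ln n))<0$,
exactly the ``unwieldy quotient'' you set out to avoid; your alternative of factoring $B_n$ as a product of two positive decreasing sequences (with the rewriting $\tfrac{n^2}{n^n-1}=(n^{n-2}-n^{-2})^{-1}$ and the elementary fact that a product of positive decreasing sequences is decreasing) is valid and considerably more transparent, at the cost of only the small observation that $(n-2)\ln n$ has positive derivative for $n\geq 2$. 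Both approaches establish the same monotone convergence $B_n\to 1^+$; yours trades the paper's brute-force sign check for a structural decomposition, which is arguably the better exposition here.
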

\begin{proof}
The bound (\ref{ineq5}) is (\cite{PhD}, Lemma 5.5.1) with $n=1$. Note that $B_n\rightarrow 1$, $B_2>1$, and note
$$\frac{(n^n-1)^3}{n^n}\frac{dB_n}{dn}=n^n[2n-(1+\ln n)(n^2+1)]-(1+\ln n)n^2-(2n-(1+\ln n))<0\qquad \bullet$$
\end{proof}

\subsubsection*{Lower Bounds}
\emph{For $k=\alpha\,n^{n-2}$, $\alpha>0$, and $n$ large}
$$\|u^{\star k}-\pi\|\geq\frac12 e^{-B\alpha}$$
\emph{where $B$ can be chosen to be any $B>1$ for sufficiently large $n$.}
\begin{proof}
Using the Lower Bound Lemma \ref{lbl} with the matrix element $\delta^{(n-1\quad n)}$, using a calculation from the PhD thesis (\cite{PhD}, p. 95), together with (\ref{ineq5}),
\begin{align*}
  \|u^{\star k}-\pi\| & \geq \frac12 \left|1-\frac{(n-1)(\sqrt{n}-1)^2}{n^n-1}\right|^{\alpha n^{n-2}} \\
   & \geq \frac12 \exp\left[\left(-\frac{(n-1)^2(\sqrt{n}-1)^4}{(n^n-1)^2}-\frac{(n-1)(\sqrt{n}-1)^2}{n^n-1}\right)\alpha n^{n-2}\right] \\
   & \geq \frac12 \exp\left[\left(-\frac{n^2n^n}{(n^n-1)^2}-\frac{n^n}{n^n-1}\right)\alpha \right]\\
   &=\frac12 \exp\left[\underbrace{\left(\frac{n^n}{n^n-1}\left(\frac{n^2}{n^n-1}+1\right)\right)}_{=B_n}\alpha \right]\qquad\bullet
\end{align*}

\end{proof}
\subsection{A Family of Walks on the Sekine Quantum Groups\label{Sek}}
\subsubsection{Sekine Quantum Groups}
Y. Sekine \cite{Sekine} introduced a family a finite quantum groups of order $2n^2$ that are neither commutative nor cocommutative.

\bigskip

The following follows the presentation of Franz and Skalski \cite{idempotent} rather than of Sekine. Let $n\geq 3$ be fixed and $\zeta_n=e^{2\pi i/n}$ and
$$\mathbb{Z}_n=\{0,1,\dots,n-1\},$$
 with addition modulo $n$.

 \bigskip

 Consider $n^2$ one-dimensional spaces $\mathbb{C} e_{(i,j)}$ spanned by elements indexed by $\mathbb{Z}_n\times\mathbb{Z}_n$, $\{e_{(i,j)}:i,j\in \mathbb{Z}_n\}$. Together with a copy of $M_n(\mathbb{C})$, spanned by elements $E_{ij}$ indexed by $\{(i,j)\,:\,i,j=1,\dots, n,\, 0\equiv n\}$, a direct sum of these $n^2+1$ spaces, the $2n^2$ dimensional space
 $$A_n=\left(\bigoplus_{i,j\in\mathbb{Z}_n}\mathbb{C} e_{(i,j)}\right)\oplus M_n(\mathbb{C}),$$
 can be given the structure of the algebra of functions on a finite quantum group denoted by $Y_n$ (so that $A_n=F(Y_n)$). On the one dimensional elements the comultiplication is given by, for $i,\,j\in \mathbb{Z}_n$:
 \begin{equation} \Delta(e_{(i,j)})=\sum_{\ell,m\in\mathbb{Z}_n}(e_{(\ell,m)}\otimes e_{(i-\ell,j-m)})+\frac{1}{n}\sum_{\ell,m=1}^n\left(\zeta_n^{i(\ell-m)}E_{\ell,m}\otimes E_{\ell+j,m+j}\right).\label{oneD}\end{equation}
 On the matrix elements in the $M_n(\mathbb{C})$ factor:
 \begin{equation}\Delta(E_{i,j})=\sum_{\ell,m\in\mathbb{Z}_n}(e_{(-\ell,-m)}\otimes \zeta_n^{\ell(i-j)}E_{i-m,j-m})+\sum_{\ell,m\in\mathbb{Z}_n}\left(\zeta_n^{\ell(j-i)}E_{i-m,j-m}\otimes e_{(\ell,m)}\right)\label{Mfact}\end{equation}
 The antipode is given by $S(e_{(i,j)})=e_{(-i,-j)}$ on the one dimensional factors and the transpose for the $M_n(\mathbb{C})$ factor. Sekine does not give the counit but by noting that $u_{(0,0)}=I_{n}$ (where $U\in M_n(M_n(\mathbb{C}))$ is defined in Sekine's original paper),  it can be seen  that the coefficient of the $e_{(0,0)}$ one-dimensional factor satisfies the counital property. The Haar state $\int_{Y_n}\in M_p(Y_n)$ is given by:
 $$\int_{Y_n}\left(\sum_{i,j\in \mathbb{Z}_n}x_{(i,j)}e_{(i,j)}+a\right)=\frac{1}{2n^2}\left(\sum_{i,j\in\mathbb{Z}_n}x_{(i,j)}+n\cdot \text{Tr}(a)\right).$$

 \bigskip

 Although Sekine restricts the construction to $n\geq 3$, for $n=1$ and $n=2$ the construction still satisfies the conditions of Kac and Paljutkin \cite{KP6} and so are algebras of functions of quantum groups. Sekine does not clarify but the construction for $n=2$ does not give the celebrated Kac--Paljutkin quantum group of order eight and indeed $Y_2$ is commonly mistaken for the Kac--Paljutkin quantum group in the literature. In fact, $Y_2$ is the dual group $\widehat{D_4}$ while $Y_1$ is the classical group $\mathbb{Z}_2$ \cite{PhD}.

\bigskip

To use the quantum Diaconis--Shahshahani Upper Bound Lemma, the representation theory of the quantum group must be well understood.
The representation theory of the Sekine quantum groups changes according to the parity of the parameter $n$ and the below restricts to $n$ odd.
\subsubsection{Representation Theory for $n$ Odd}
 For $n$ odd there are $2n$ one dimensional representations and $\binom{n}{2}$ two dimensional representations. Consider the convolution algebra $(F(Y_n),\star_{F(Y_n)})$. Sekine gives $2n$ minimal one-dimensional central projections, $\binom{n}{2}$ minimal two-dimensional central projections,  and matrix units in the two-dimensional subspaces. S\'{e}bastian Palcoux (private communication, March 2016) suggests a connection between projections and matrix units in the convolution algebra and the comultiplication in the algebra of functions. Palcoux's approach uses slightly different Fourier transforms and convolutions --- and the language of planar algebras (see \cite{Palc}) --- therefore his results could not be used directly. However it was possible to show that the one-dimensional central projections in $(F(Y_n),\star_{F(Y_n)})$ were the matrix elements of the one dimensional representations, while the two-dimensional central projections were, up to a factor of two, the matrix elements of the irreducible two-dimensional representations (see the appendix to \cite{PhD} for the brute-force verification). As far as the author knows this was not explicitly stated in the existing literature.

 \bigskip

  Let $\ell\in\{0,1,\dots,n-1\}$. Then
\begin{align*}
\rho_\ell^{\pm}=\sum_{i,j\in\mathbb{Z}_n}\zeta_n^{i\ell}e_{(i,j)}\pm\sum_{m=1}^n E_{m,m+\ell},
\end{align*}
are the $2n$ matrix elements of the one dimensional representations so that
$$\kappa_\ell^{\pm}(\lambda)=\lambda\otimes\rho_\ell^{\pm}\text{ and }\Delta(\rho_\ell^{\pm})=\rho_\ell^{\pm}\otimes \rho_\ell^{\pm}.$$
 Note that $\rho_0^+=\mathds{1}_{Y_n}$ is the matrix element of the trivial representation.

\bigskip

Now let $u\in\{0,1,\dots,n-1\}$ and $v\in\{1,2,\dots,(n-1)/2\}$. Each pair gives a two dimensional representation $\kappa^{u,v}:\mathbb{C}^2\rightarrow \mathbb{C}^2\otimes F(Y_n)$ with matrix elements:
\begin{align*}
\left(\begin{array}{cc}
\rho_{11}^{u,v} & \rho_{12}^{u,v}
\\ \rho_{21}^{u,v} & \rho_{22}^{u,v}
\end{array}\right)= \left(\begin{array}{cc}
\displaystyle \sum_{i,j\in\mathbb{Z}_n} \zeta_n^{iu+jv}e_{(i,j)} & \displaystyle \sum_{m=1}^n\zeta_n^{-mv}E_{m,m+u}
\\ \displaystyle\sum_{m=1}^n\zeta_n^{mv}E_{m,m+u} & \displaystyle\sum_{i,j\in \mathbb{Z}_n}\zeta_n^{iu-jv}e_{(i,j)}
\end{array}\right).
\end{align*}
\subsubsection{States on the Sekine Quantum Groups}
Consider the basis of  $F(\widehat{Y_n})$ dual to $\{e_{(i,j)}\,:\,i,j\in\mathbb{Z}_n\}\cup \{E_{i,j}\,:\,i,j=1,2,\dots,n\}$ given by
$$
\begin{array}{ccc}
e^{(i,j)}e_{(r,s)}=\delta_{i,r}\delta_{j,s}&\text{\qquad and \qquad} & e^{(i,j)}E_{r,s}=0,
\\ E^{i,j}e_{(r,s)}=0&\text{\qquad and \qquad}&E^{i,j}E_{r,s}=\delta_{i,r}\delta_{j,s}\end{array}.
$$
Let $\mu\in F(\widehat{Y_n})$:
$$\mu=\sum_{i,j\in\mathbb{Z}_n}x_{(i,j)}e^{(i,j)}+\sum_{p,q=1}^na_{p,q}E^{p,q}.$$
Franz and Skalski \cite{idempotent} show that $\mu\in M_p(Y_n)$ if and only if
\begin{itemize}
\item $x_{(i,j)}\geq0$ for all $i,j\in\mathbb{Z}_n$,
\item the matrix $A=(a_{pq})$ is positive,
\item $\operatorname{Tr}(\mu):=\sum_{i,j\in\mathbb{Z}_n}x_{(i,j)}+\sum_{p=1}^na_{p,p}=1$.
\end{itemize}
\subsubsection{A Random Walk}
Where $n$ is odd, and $J_n=\sum_{p,q=1}^n E^{p,q}\in M_n(\mathbb{C})$ the matrix of all ones, consider the state
$$ \nu=\frac{1}{8}(e^{(0,1)}+e^{(1,0)}+e^{(-1,0)}+e^{(0,-1)})+\frac{1}{2n}J_n\in M_p(Y_n).$$

The Diaconis--Shahshahani Upper Bound Lemma gives:
\begin{equation}\label{ublY}
  \|\nu^{\star k}-\pi\|^2\leq\frac14 \sum_{\alpha\in\operatorname{Irr}(Y_n)\backslash\{\tau\}}d_\alpha\operatorname{Tr}\left[\left(\widehat{\nu}(\alpha)^*\right)^k\widehat{\nu}(\alpha)^k\right]
\end{equation}

\bigskip

\begin{proposition}
For $k=\alpha\, n^2$, with $\alpha>1/20$  and  $n\geq 3$
$$\frac12 e^{-\alpha \pi^2/2}\leq \|\nu^{\star k}-\pi\|\leq c_ne^{-\alpha\pi^2/4},$$
with $c_n\rightarrow 1$ as $n\rightarrow \infty$.
\end{proposition}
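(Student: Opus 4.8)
The plan is to run the two-sided machinery already assembled: the Upper Bound Lemma in the form (\ref{ublY}) for the upper estimate and the Lower Bound Lemma \ref{lbl} for the lower estimate, after first computing the Fourier transform $\widehat{\nu}(\alpha)$ at every irreducible representation of $Y_n$ recorded above. Pairing $\nu$ against the matrix elements through the dual basis is the starting point: $\nu(e_{(i,j)})$ equals $\tfrac18$ at the four points $(0,\pm 1),(\pm 1,0)$ and vanishes elsewhere, while $\nu(E_{p,q})=\tfrac{1}{2n}$ for every $p,q$.

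For the one-dimensional representations $\kappa_\ell^{\pm}$ this yields the scalars $\widehat{\nu}(\kappa_\ell^{\pm})=\overline{\nu(\rho_\ell^{\pm})}$ with $\nu(\rho_\ell^{+})=1-\tfrac12\sin^2(\pi\ell/n)$ and $\nu(\rho_\ell^{-})=-\tfrac12\sin^2(\pi\ell/n)$; for the two-dimensional $\kappa^{u,v}$ the decisive point is that the off-diagonal matrix elements pair with $\nu$ to give $\tfrac{1}{2n}\sum_{m=1}^n\zeta_n^{\mp mv}=0$ (since $v\not\equiv 0$), so $\widehat{\nu}(\kappa^{u,v})=\tfrac14(\cos(2\pi u/n)+\cos(2\pi v/n))I_2$ is a real scalar matrix of norm at most $\tfrac12$. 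Thus, among all nontrivial irreducibles, only the family $\rho_\ell^{+}$ can have Fourier data with modulus near $1$, and $\nu(\rho_\ell^{+})$ is maximised at $\ell=1$ and $\ell=n-1$, where it equals $1-\tfrac12\sin^2(\pi/n)$.

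For the upper bound I would substitute into (\ref{ublY}): the one-dimensional terms contribute $|\nu(\rho_\ell^{\pm})|^{2k}$ and the two-dimensional terms $4\,c_{u,v}^{2k}$, so every contribution of modulus at most $\tfrac12$ is dominated by $4^{-\alpha n^2}$ and, there being only $O(n^2)$ of them, their total is super-exponentially negligible. What survives is $\sum_{\ell=1}^{n-1}(1-\tfrac12\sin^2(\pi\ell/n))^{2k}$, which I would pin to its two equal leading terms at $\ell=1,n-1$ together with a geometrically small remainder; applying $1-x\leq e^{-x}$ and the elementary fact that $n^2\sin^2(\pi/n)\to\pi^2$ (with $n\sin(\pi/n)\leq\pi$) converts $2(1-\tfrac12\sin^2(\pi/n))^{2\alpha n^2}$ into a bound of the advertised form $c_n e^{-\alpha\pi^2/4}$, the hypothesis $\alpha>1/20$ being used precisely to ensure the subleading $\ell=2,n-2,\dots$ terms are a controllably small fraction of the leading one so the accumulated prefactor can be absorbed into a constant tending to $1$.

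For the lower bound I would apply Lemma \ref{lbl} with the matrix element $\rho_1^{+}$ of the nontrivial one-dimensional representation $\kappa_1^{+}$, which is unitary and has zero Haar expectation by the orthogonality argument of Proposition \ref{vanish}. This gives $\|\nu^{\star k}-\pi\|\geq\tfrac12|\nu(\rho_1^{+})|^{k}=\tfrac12(1-\tfrac12\sin^2(\pi/n))^{\alpha n^2}$, and comparing Taylor expansions to verify $1-\tfrac12\sin^2(\pi/n)\geq e^{-\pi^2/(2n^2)}$ for $n\geq 3$ upgrades this to $\tfrac12 e^{-\alpha\pi^2/2}$. I expect the main obstacle to be the upper-bound bookkeeping rather than the lower bound: establishing that the two-dimensional Fourier transforms are scalar (requiring the conjugate-representation convention and the vanishing character sums) and then controlling the sum over all $\sim n^2$ representations uniformly in $n$ --- so that only $\rho_1^{+},\rho_{n-1}^{+}$ matter and the leftover prefactor genuinely tends to $1$ under $\alpha>1/20$ --- is the delicate step, whereas the lower bound reduces to a single one-dimensional modulus computation.
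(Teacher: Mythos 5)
Your proposal is correct and follows essentially the same route as the paper: compute $\widehat{\nu}$ at every irreducible, observe that only the $\kappa_\ell^+$ family matters (the $\kappa_\ell^-$ terms and the scalar two-dimensional transforms contribute $O(n^2 4^{-k})$), reduce the surviving sum to its $\ell=1,n-1$ terms plus a tail controlled by $\alpha>1/20$, and obtain the lower bound from Lemma \ref{lbl} applied to $\rho_1^+$ via $1-\tfrac12\sin^2(\pi/n)\geq e^{-\pi^2/2n^2}$. The one slip is that your cited inequality $n\sin(\pi/n)\leq\pi$ bounds the leading term from \emph{below}; for the upper bound you need an opposite-direction estimate such as $n^2\sin^2(\pi/n)\geq\pi^2/2$ for $n\geq 3$ (or the paper's uniform bound $\tfrac12(\cos^2x+1)\leq e^{-x^2/4}$ on $[0,\pi/2]$) to land under $c_ne^{-\alpha\pi^2/4}$.
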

\begin{proof}
For the Upper Bound, consider first the one-dimensional representations $\kappa_\ell^\pm$ (except the trivial representation $\kappa_0^+$).
\begin{align*}
\nu(\rho_\ell^{\pm})&=\frac{1}{8}\left(\zeta_n^\ell+\zeta_n^{-\ell}+2\right)\pm 2
\\  & =\begin{cases}
\dfrac12 \left(\cos^2\left(\dfrac{\pi \ell}{n}\right)+1\right) & \text{ if }\pm=+
\\[2ex] -\dfrac12\sin^2\left(\dfrac{\pi \ell}{n}\right) & \text{ if }\pm=-
\\[2ex] 0 & \text{ for }\kappa_\ell^\pm=\kappa_0^-
\end{cases}.
\end{align*}
Note that the $\kappa_\ell^+$ will dominate to such an extent that very trivial bounds may be used on the other terms.

\bigskip

The contribution to (\ref{ublY}) from the $\kappa_\ell^+$ is given by
\begin{align*}
\frac14 \sum_{\alpha=\kappa_\ell^+}d_\alpha\operatorname{Tr}\left[\left(\widehat{\nu}(\alpha)^*\right)^k\widehat{\nu}(\alpha)^k\right]&=\frac14 \sum_{\ell=1}^{n-1} |\nu(\rho_\ell^+)|^{2k}=\frac14 \sum_{\ell=1}^{n-1}\left(\dfrac12 \left(\cos^2\left(\dfrac{\pi \ell}{n}\right)+1\right)\right)^{2k}.
\end{align*}
Using the symmetry of $\cos^2(x)$ about $x=\pi/2$, $\cos x\leq e^{-x^2/2}$ on $[0,\pi/2]$ (Theorem 2, p. 26, \cite{DiaconisBook}), and employing the following inequality:
\begin{align*}
\frac{1}{2}(\cos^2x+1)\leq \frac12 (\cos x+1)&=\left(\cos\left(\frac{x}{2}\right)\right)^2\leq \left(e^{-(x/2)^2/2}\right)^2=e^{-x^2/4},
\\\Rightarrow \frac14 \sum_{\alpha=\kappa_\ell^+}d_\alpha\operatorname{Tr}\left[\left(\widehat{\nu}(\alpha)^*\right)^k\widehat{\nu}(\alpha)^k\right]&\leq
\frac12 \sum_{\ell=1}^{\frac{n-1}{2}} e^{-\pi^2\ell^2\alpha/2}
\end{align*}
A similar sum occurs in the analysis of the classical walk on $\mathbb{Z}_n$. See p.26 of Diaconis \cite{DiaconisBook} to see how such sums are handled (details teased out in Section 3.4 of \cite{MSc}):
$$\frac14 \sum_{\alpha=\kappa_\ell^+}d_\alpha\operatorname{Tr}\left[\left(\widehat{\nu}(\alpha)^*\right)^k\widehat{\nu}(\alpha)^k\right]\leq
e^{-\alpha\pi^2/2}$$
for $\alpha\geq 1/20$
\end{proof}

\bigskip

With the dominant term identified, the other terms can be bound crudely. In particular,
\begin{align*}
\frac14 \sum_{\alpha=\kappa_\ell^-}d_\alpha\operatorname{Tr}\left[\left(\widehat{\nu}(\alpha)^*\right)^k\widehat{\nu}(\alpha)^k\right]&=\frac{1}{4}\sum_{\ell=1}^{n-1}\frac{1}{4^{k}}\sin^{4k}\left(\frac{\pi \ell}{n}\right)\leq \frac{1}{4}\sum_{\ell=1}^{n-1}\frac{1}{4^k}=\frac{1}{4}\frac{n-1}{4^{\alpha n^2}}
\\&\leq \left(\frac{n}{4}\cdot \frac{e^{\alpha\pi^2/2}}{4^{\alpha n^2}}\right)e^{-\alpha \pi^2/2}\leq \left(\frac{n}{4}\cdot \frac{e^{\alpha\pi^2/2}}{e^{\alpha n^2}}\right)e^{-\alpha \pi^2/2}
\\&\leq \left(\frac{n}{4}\cdot e^{-\alpha(n^2-\pi^2/2)}\right)e^{-\alpha \pi^2/2}
\\&\underset{\alpha\geq 1/20}{\leq} \left(\frac{n}{4}\cdot e^{-(n^2-\pi^2/2)/20}\right)e^{-\alpha \pi^2/2}=:d(n)\cdot e^{-\alpha \pi^2/2}.
\end{align*}

\bigskip

Note that $\widehat{\nu}(\kappa^{u,v})$ is real and diagonal:
\begin{align*}
\widehat{\nu}(\kappa^{u,v})&=\frac{1}{2}\left(\cos\left(\dfrac{2\pi u}{n}\right)+\cos\left(\dfrac{2\pi v}{n}\right)\right)I_2
\\ \Rightarrow \operatorname{Tr}[(\widehat{\nu}(\kappa^{u,v})^*)^k\widehat{\nu}(\kappa^{u,v})^k]&=\frac{2}{4^k}\left(\cos\left(\frac{2\pi u}{n}\right)+\cos\left(\frac{2\pi v}{n}\right)\right)^{2k}
\end{align*}
Using the trivial $|\cos(x)+\cos(y)|\leq 2$ bound,
\begin{align*}
\frac14 \sum_{\alpha=\kappa^{u,v}}d_\alpha\operatorname{Tr}\left[\left(\widehat{\nu}(\alpha)^*\right)^k\widehat{\nu}(\alpha)^k\right]&\leq
\frac{1}{4^k}\sum_{\substack{u=0,\dots,n-1\\ v=1,\dots,\frac{n-1}{2}}}2^{2k}=\frac{2^{2k}}{4^{2k}}\frac{n(n-1)}{2}
\\&\leq \frac12\frac{n^2}{4^{\alpha n^2}}\leq \left(\frac{1}{2}n^2  e^{-\alpha(n^2-\pi^2/2)}\right)e^{-\alpha \pi^2/2}
\\&\leq 2n\cdot d(n)\cdot e^{-\alpha{\pi^2/2}}.
\end{align*}

\bigskip

Putting these bounds together gives the result with

$$c_n=\sqrt{1+d(n)+2n\cdot d(n)}$$

\bigskip

Consider the Lower Bound Lemma \ref{lbl}  with $\rho_{1}^+$:
$$\|\nu^{\star k}-\pi\|\geq \frac{1}{2}\left|\frac{\cos^2\left(\dfrac{\pi}{n}\right)+1}{2}\right|^k.$$
Consider $h(x)=\ln\left(\dfrac12 (\cos^2x+1)\,e^{x^2/2}\right)$. Note $h(0)=h'(0)=h''(0)=0$ but
$$h''(x)=\frac{(1-\cos^2x)(3-\cos^2x)}{(\cos^2x+1)^2}>0,$$
for $x\in (0,\pi)$. Therefore $\dfrac12 (\cos^2x+1)\geq e^{-x^2/2}$ and
\begin{align*}
\|\nu^{\star k}-\pi\|&\geq \frac12 e^{-\pi^2k/2n^2}=\frac{1}{2}e^{-\alpha \pi^2/2}\qquad \bullet
\end{align*}

\bigskip
Note that $c_n\rightarrow 1$ very rapidly. Below the bounds are plotted for $c_n=1$:

\begin{figure}[ht]\begin{center}\epsfig{figure=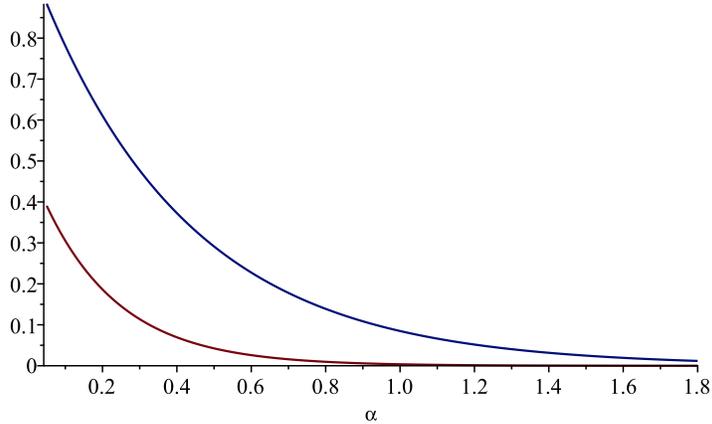,scale=0.5}\caption{Note that for $k=n^2/5$ the distance to random is bounded above by $3/5$ and bounded below away from zero. However at $k=3n^2/5$ the distance to random is still bounded away from zero. Therefore the cut-off phenomenon is not exhibited.}\end{center}\end{figure}

\newpage

\subsection*{Acknowledgements.} I would like to thank Adam Skalski; Example \ref{Sek} was developed during a visit to Adam at the Institute of Mathematics of the Polish Academy of Sciences (IMPAN), Warsaw, Poland. This trip was financially supported by IMPAN and also Cork Institute of Technology. I would like to thank Uwe Franz for assisting with Proposition \ref{tvd}. I would like to thank Amaury Freslon for encouragement and helpful comments; in particular for help in greatly improving the presentation of the bounds for the random walk on $\widehat{S_n}$.  The rest of the paper was developed during the author's PhD study at University College Cork, under the supervision of Stephen Wills.

\end{document}